\theoremstyle{plain}
\newtheorem{Theorem}{Theorem}[section]
\newtheorem{Definition}[Theorem]{Definition}
\newtheorem{Proposition}[Theorem]{Proposition}
\newtheorem{Lemma}[Theorem]{Lemma}
\newtheorem{Remark}[Theorem]{Remark}
\numberwithin{Theorem}{section}
\numberwithin{equation}{section}
\def\proof{\noindent{{\bf Proof. }}}
\def\square{\vbox{
\hrule height .4pt \hbox{\vrule width .4pt height 7pt \kern 7pt
\vrule width .4pt} \hrule height .4pt }}
\def\QED{\hfill {$\square$}\goodbreak \medskip}
\newcommand{\average}{{\mathchoice {\kern1ex\vcenter{\hrule height.4pt
width 6pt depth0pt} \kern-9.7pt} {\kern1ex\vcenter{\hrule
height.4pt width 4.3pt depth0pt} \kern-7pt} {} {} }}
\def\R{\mathbb{R}}
\renewcommand{\a }{\alpha }
\renewcommand{\phi}{\varphi}
\newcommand{\D }{\Delta }
\newcommand{\e }{\varepsilon }
\newcommand{\n }{\nabla }
\newcommand{\s }{\sigma }
\renewcommand{\t }{\tau }
\renewcommand{\O }{\Omega }
\newcommand{\ov}{\overline}
\newcommand{\be}{\begin{equation}}
\newcommand{\ee}{\end{equation}}
\newcommand{\N}{\mathbb{N}}
\newcommand{\cG}{{\mathcal G}}
\newcommand{\cL}{{\mathcal L}}
\newcommand{\cO}{{\mathcal O}}
\DeclareMathOperator{\Iso}{{\mathcal I}}
\newcommand{\B}{{\bf B}}
\newcommand{\eps}{\varepsilon}
\renewcommand{\epsilon}{\varepsilon}
\begin{document}

\title[Exceptional domains in higher dimensions ]
{Exceptional domains in higher dimensions }

\author{Ignace Aristide Minlend}
\address{I.A.M: Faculty of Economics and Applied Management, University of Douala,  BP 2701, Douala, Littoral Province, Cameroon}
\email{\small{ignace.a.minlend@aims-senegal.org,ignace.minlend@univ-douala.com} }

\author{Tobias Weth}
\address{T.W.:  Goethe-Universit\"{a}t Frankfurt, Institut f\"{u}r Mathematik.
Robert-Mayer-Str. 10 D-60629 Frankfurt, Germany.}
\email{weth@math.uni-frankfurt.de}
\author{Jing Wu}
\address{J.W.: Departamento de An\'alisis matem\'atico, Universidad de Granada,
	Campus Fuente-nueva, 18071 Granada, Spain.}
\email{jingwulx@correo.ugr.es}

\keywords{Overdetermined problems, exterior domains, exceptional domains}

\begin{abstract}
  We prove  the existence of nontrivial unbounded  exceptional domains in the Euclidean space $\R^N$,  $N\geq4$. These domains arise as perturbations of complements of straight cylinders in $\R^N$, and by definition they support a positive harmonic function with vanishing Dirichlet boundary values and constant Neumann boundary values, the so-called roof function. While the domains have a similar shape as those constructed in the recent work  \cite{Fall-MinlendI-Weth3} for the case $N=3$, there is a striking constrast with regard to the shape of corresponding roof functions which are bounded for $N \ge 4$. Moreover, while the analysis in  \cite{Fall-MinlendI-Weth3} does not extend to higher dimensions, the approach of the present paper depends heavily on the assumption $N \ge 4$.
\end{abstract}
\maketitle

\textbf{MSC 2010}:  35J57, 35J66,  35N25, 35J25, 35R35, 58J55
\section{Introduction and main result}
This paper deals with the  existence  of  \emph{nontrivial exceptional}  subdomains  of  the Euclidean space  $\R^{N}$,  $N\geq 4$. In the classical literature, a smooth domain $\O$ of the Euclidean space $\R^N$ is  said  to be an  \emph{exceptional} domain if  there exists a positive harmonic function   with  vanishing  Dirichlet boundary data  and  constant nonzero Neumann boundary  data. Such a function is referred to as the \emph{roof function.} The  problem of  finding  exceptional domains  goes back to the pioneer work  by  L. Hauswirth, F. H\'{e}lein, and F. Pacard  in  \cite{hauswirth-et-al}, where the  nontrivial exceptional domain
\begin{equation}
  \label{eq:catenoid-ex}
\Omega_0:= \{(x,y) \in \R^2: |y  |<\frac{\pi}{2}+\cosh (x)\}
\end{equation}
was  discovered  in the plane. Later on,  the  classification of planar exceptional domains  was addressed  by  Khavinson, Lundberg and Teodorescu \cite{KLT13}. Inspired by  \cite{KLT13},  Traizet \cite{Traizet}    was able  to  prove   that the only examples up to rotation and translation  of planar exceptional domains having finitely boundary components are the exterior of a disk, a halfplane and the nontrivial domain $\Omega_0$. In  \cite[Example 7.3]{Traizet}, he further proved the existence of a nontrial periodic  exceptional domain corresponding  to Scherk's simply periodic minimal bigraphs. We note that  a  family of infinitely connected  planar exceptional domains was already  discovered   in fluid dynamics by G. R. Baker, P. G. Saffman and J. S. Sheffield  \cite{Baker, CrowdyGreen}.

Up to date the  structure of the set  of exceptional domains in dimensions $N \ge 3$ remains largely unknown.  To mention the few existing results, the authors in  \cite[Theorem 7.1]{KLT13}  classified the exteriors of balls as the only exceptional domains  in  $\R^N$ whose complements  are  bounded, connected and have $C^{2, \alpha}$ boundaries.
Regarding the existence of \emph{nontrivial} exceptional  domains  in  higher  dimensions, we are only aware of the recent works   \cite{Fall-MinlendI-Weth3, LiuWangWei}.  Indeed,   Liu,  Wang and Wei \cite{LiuWangWei} constructed
higher dimensional analogues of the domain $\Omega_0$ given in (\ref{eq:catenoid-ex}). Moreover, in \cite{Fall-MinlendI-Weth3},   Fall, Minlend and Weth constructed exceptional subdomains $\Omega \subset \R^3$ which are perturbations of the straight cylinder in $\R^3$. These domains solve an electrostatic problem, as they enjoy the property that the constant charge distribution on their boundaries  is an electrostatic equilibrium. 
In this work, we investigate  the existence of exceptional  domains of  $\R^{N}$  with $N\geq 4$, where the overdetermined boundary value problem
\begin{align}\label{eq:ovder}
\begin{cases}
\Delta   u    = 0& \quad  \textrm { in } \quad  \Omega,  \\
   u  =  1& \quad\textrm{ on } \quad\partial \Omega, \\
 \lim \limits_{|z| \rightarrow \infty}  u (z, t )  =  0& \quad \textrm{uniformly in $t \in \R$,}\\
  \dfrac{\partial  u } {\partial \eta}= c &\quad
\textrm { on } \quad \partial\Omega,
\end{cases}
 \end{align}
is solvable and $1-u$ is the roof function associated with $\Omega$.  Here, $\eta$  is the unit outward normal vector to  the boundary $\partial \O$,  $c$ is a positive constant.
For an exterior domain $\O = \R^N \setminus \overline D$ associated with a smooth bounded domain $D$, prototypes  of problem  \eqref{eq:ovder} arise in potential theory and  are related to the \emph{Newton capacity} of $D$ defined,  for $N\geq3$,   by
$$
\operatorname{Cap}(D) = \frac{1}{N(N-2)\omega_N} \inf \left \{
\int_{\R^N} |\nabla u|^2 dz : u \in H^1_0(\R^N),  u \geq 1 \quad \textrm{in}\quad D
\right \}, $$
where $\omega_N$ is the Euclidean volume of an $N$-dimensional unit
ball (see for instance \cite{GoldmanNovagaRuffuni}). Standard results in potential theory imply this infimum is
realized by the equilibrium potential function $u_\O$, which solves
the boundary value problem
\begin {equation} \label{cap-pde}
\Delta  u_\O = 0  \textrm{ in } \Omega , \qquad
\left. u_\O \right |_{\partial \O} = 1, \qquad \lim \limits_{|z| \rightarrow \infty}
u_\O (z) = 0 \end {equation}
When $N=3$, $\operatorname{Cap}(D)$ represents the capacitance (i.e ability to hold electric charge) of the condenser $D$ immersed in an isotropic dielectric, that is the total charge $D$ can hold while maintaining a given potential energy (computed with an idealized ground at infinity). Of particular interest regarding problem  \eqref{cap-pde}   is  the question whether there exists a domain $\O$ such that the intensity of the corresponding electric field $\n u_\O$ is constant on the boundary, that is
\begin{equation}\label{Newmanncond}
|\n u_\O|=c \quad \textrm{on}\quad \partial \O,
\end{equation}
where $c$ is a positive constant.
The first  result addressing problem \eqref{cap-pde}-\eqref{Newmanncond}  was  obtained by   Reichel  \cite{R97} who proved, more generally, that if there exists a   $u\in C^2(\overline{\R^N \backslash  \O})$ with
\begin{align}\label{eq:oReichel}
\begin{cases}
-\Delta   u    = f(u)& \quad  \textrm { in } \quad  \Omega,  \\
   u  =  1& \quad\textrm{ on } \quad\partial \Omega, \\
 \lim \limits_{|z| \rightarrow \infty}  u (z )  =  0,\\
  \dfrac{\partial  u } {\partial \eta}= c&\quad
\textrm { on } \quad \partial\Omega,\\
0<u\leq a & \quad  \textrm { in } \quad  \Omega,
\end{cases}
 \end{align}
then $\O$  is ball and  $u$ is radially symmetric and radially decreasing with respect to the center of $\O$. Here $t \mapsto f(t)$ is a locally Lipschitz function, non-increasing for non negative and small values of $t$. In the special case $f\equiv 0$, this result therefore characterizes the ball as the only  electric conductor such that (when embedded in an isotropic dielectric) the intensity of the corresponding electric field is constant on the boundary. To prove his result, Reichel  \cite{R97}  used a variant of the \emph{moving plane method}. The method goes back to the work of Alexandrov
\cite{Alexandrov} on constant mean curvature surfaces, and it has been refined by Serrin in the seminal paper  \cite{S71} to prove a rigidity result for the overdetermined problem
\begin{equation}\label{eq1}
  \Delta u +f(u)=0,\: u >0 \quad \text{in $\Omega$,}\qquad\quad   u=0,\: \partial_{\eta} u=\mbox{constant}\quad \text{on $\partial\Omega$}
\end{equation}
in bounded domains $\Omega$. We refer the reader to  the references \cite{Alessandrini, BerchioGazzolaWeth, BrockHenrot,FraGazzolaKawohl, FragalaGazzola, farina-valdinoci, farina-valdinoci:2010-1,farina-valdinoci:2010-2,farina-valdinoci:2013-1, farina-valdinoci:2013-2, Gazzola, GarofaloLewis, Greco, Lamboley, MorabitoSicbaldi, Philippin, PaynePhilippin, PhilippinPayne, Rei, Fall-MinlendI-Weth, Fall-MinlendI-Weth2, Fall-Minlend} where the moving plane method has been extended to other symmetry problems and different ambient spaces.

Closely related to \cite{R97} is the work \cite{AftalionBusca} of Aftalion and  Busca, who studied  \eqref{eq:oReichel}   without  the decay at infinity  and under different assumptions on $f$ including the interesting case $f(t)=t^p$ with $\frac{N}{N-2}<p\leq \frac{N+2}{N-2}$. Moreover, Sirakov  \cite{Sirakov}   proved that the result in \cite{R97}  holds  without the assumption $u< a$ in $\O$ and for possibly multi-connected sets $\O$.

While the previous literature  is  mainly  focused on  exterior domains,  we are not aware  of any   result addressing problem  \eqref{cap-pde}-\eqref{Newmanncond}  in $\R^{N}\setminus \O,$ with $\O$ unbounded.  In this work, we are concerned with the construction of subdomains $\Omega \subset \R^{N-1}\times \R$, $N\geq 4$,  such that the overdetermined boundary value problem \eqref{eq:ovder} is solvable. The domains  we consider are   the  complements of perturbed cylinders. More precisely, they are of the form
\begin{equation}
  \label{eq:def-o-phi}
\Omega_{T, \varphi}:=\left\{ (z,t) \in \R^{N-1} \times \R\::\:  |z|>1 + \phi(\frac{2\pi}{T} t) \right\} \subset\R^N,
\end{equation}
where $T>0$ and  $\phi: \R \to (-1,1)$ is a $2 \pi$-periodic function of class $C^{2,\alpha}$, for some $\alpha \in (0.1)$. The case $\phi \equiv 0$  in   \eqref{eq:def-o-phi} corresponds to  the exterior of  the straight cylinder  $\B_{1} \times \R$, and in this case the function $u_{1}(z)=|z|^{3-N}$ solves  \eqref{eq:ovder}  with  $c=-(3-N)$.

Our main result  is the following.
\begin{Theorem}\label{Theo1}
Let $N \geq 4$. Then there exist a number  $T_*> \dfrac{2\pi}{\sqrt{N-2}}$  and  a smooth curve
$$
(-{\e},{\e}) \to   (0,+\infty) \times  C^{2,\alpha}(\R) ,\qquad s \mapsto (T_s,v_s)
$$
satisfying
$T_0= T_*$ and  $v_{0} \equiv 0$  with  $$\int^{\pi}_{-\pi} v_s (t) \cos(t)\,dt=0$$ such that for all $s\in (-{\e},{\e})$, letting  $\varphi_s(t)=s \cos(t)+s v_s$, there  exists  a unique  function $u_s\in C^{2,\a}(\ov{\O_{ T_s, \varphi_s}})$ satisfying
%
\begin{equation}\label{eq:solved-main-ND}
    \begin{cases}
       \D u_s =  0 & \quad  \textrm { in } \quad  \O_{ T_s, \varphi_s},\\
             u_s=1 & \quad  \textrm { on } \quad \partial \O_{ T_s, \varphi_s},\\
       |\n u_s | =N-3 & \quad  \textrm { on } \quad \partial \O_{ T_s, \varphi_s},\\
\lim \limits_{|z| \rightarrow \infty}  u_s (z, t )  =  0  &  \quad \textrm{uniformly in  $t \in \R$}.
    \end{cases}
\end{equation}
Moreover, $u_s$ is radial in $z$ and $T_s$-periodic and even in $t$ for every $s \in (-\eps,\eps)$.
\end{Theorem}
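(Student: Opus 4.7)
I would set up (\ref{eq:solved-main-ND}) as a bifurcation problem seeded by the explicit radial solution $u_1(z)=|z|^{3-N}$ of (\ref{eq:ovder}) on the unperturbed exterior $\O_{T,0}=\{|z|>1\}\subset\R^N$, treating the period $T$ as the bifurcation parameter and working in the Banach spaces
\begin{equation*}
X=\{\phi\in C^{2,\a}(\R/2\pi\Z)\,:\,\phi\text{ even}\},\qquad Y=\{g\in C^{1,\a}(\R/2\pi\Z)\,:\,g\text{ even}\},
\end{equation*}
where the evenness condition kills the $t$-translation invariance and selects the symmetry class announced in the theorem. For $(T,\phi)$ in a neighbourhood of $(T_*,0)$ in $(0,\infty)\times X$, I would straighten the boundary of $\O_{T,\phi}$ to that of $\O_{T,0}$ via a smooth, compactly supported diffeomorphism $\Psi_{T,\phi}$ of $\ov{\O_{T,0}}$, and solve the pulled-back exterior Dirichlet problem in weighted H\"older spaces adapted to the decay $|z|^{3-N}$. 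Standard Poisson-kernel estimates on the straight cylinder give unique solvability and smooth dependence of the solution $u(T,\phi)$ on $(T,\phi)$, with $u(T,0)=u_1$. Transporting the Neumann condition $|\n u|=N-3$ back to $\partial\O_{T,0}$ then produces a smooth operator $F:(0,\infty)\times U\to Y$ with $F(T,0)\equiv 0$ whose nontrivial zeros correspond to solutions of (\ref{eq:solved-main-ND}).

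A standard shape-derivative calculation at $u_1$, using $\partial_r u_1(1)=3-N$ and $\partial_r^2 u_1(1)=(3-N)(2-N)$ together with the first-order change of the outward normal on the perturbed boundary, identifies the linearisation $L(T):=D_\phi F(T,0):X\to Y$ as
\begin{equation*}
L(T)\phi=\partial_r w|_{r=1}+(N-2)(N-3)\phi,
\end{equation*}
where $w$ is the unique $T$-periodic decaying harmonic extension of $(N-3)\phi$ in $\{r>1\}$. Writing $\theta=\frac{2\pi}{T}t$ and separating variables, the mode $\phi=\cos(k\theta)$, $k\ge 1$, lifts to a modified Bessel equation whose decaying solution is $(N-3)r^{(3-N)/2}K_\nu(\mu_k r)/K_\nu(\mu_k)\cdot\cos(k\theta)$ with $\nu=\frac{N-3}{2}$ and $\mu_k=\frac{2\pi k}{T}$. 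Applying the recurrence $zK_\nu'(z)=-\nu K_\nu(z)-zK_{\nu-1}(z)$, the operator $L(T)$ diagonalises as $L(T)\cos(k\theta)=\lambda_k(T)\cos(k\theta)$ with
\begin{equation*}
\lambda_k(T)=(N-3)\Bigl[1-\mu_k\,\frac{K_{\nu-1}(\mu_k)}{K_\nu(\mu_k)}\Bigr]\quad(k\ge 1),\qquad \lambda_0(T)=N-3,
\end{equation*}
the $k=0$ eigenvalue being read off from the explicit radial extension $(N-3)r^{3-N}$.

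The spectral problem thus reduces to analysing the positive roots of
\begin{equation*}
H(\mu):=K_\nu(\mu)-\mu K_{\nu-1}(\mu),\qquad \nu=\tfrac{N-3}{2}\ge\tfrac12,
\end{equation*}
since $\lambda_k(T)=(N-3)H(\mu_k)/K_\nu(\mu_k)$. The small-argument asymptotics of $K_\nu$ give $H(0^+)=+\infty$, while $K_\nu(\mu)\sim\sqrt{\pi/(2\mu)}e^{-\mu}$ as $\mu\to\infty$ gives $H(\mu)<0$ for large $\mu$, so a positive root exists. For $N=4$ one has $\nu=\tfrac12$ and $K_{\pm1/2}(\mu)=\sqrt{\pi/(2\mu)}e^{-\mu}$, whence $H(\mu)=\sqrt{\pi/(2\mu)}e^{-\mu}(1-\mu)$ and the unique root is $\mu_*=1$, so $T_*=2\pi>\pi\sqrt{2}=2\pi/\sqrt{N-2}$. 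For $N\ge 5$, Tur\'an-type monotonicity of $\mu\mapsto\mu K_{\nu-1}(\mu)/K_\nu(\mu)$, derivable from the integral representation $K_\nu(\mu)=\int_0^\infty e^{-\mu\cosh t}\cosh(\nu t)\,dt$, yields uniqueness and simplicity of the smallest positive root $\mu_*$, and the same representation lets one check directly that $H(\sqrt{N-2})<0$, so that $\mu_*<\sqrt{N-2}$, equivalently $T_*>2\pi/\sqrt{N-2}$. Uniqueness of $\mu_*$ forces $\lambda_k(T_*)<0$ for every $k\ge 2$ (since $\mu_k=k\mu_*>\mu_*$), and together with $\lambda_0(T_*)>0$ this yields $\ker L(T_*)=\spann\{\cos\theta\}$ and identifies $L(T_*)$ as a Fredholm operator of index $0$.

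The Crandall--Rabinowitz transversality condition $\partial_T L(T_*)[\cos\theta]\notin\mathrm{Range}(L(T_*))$ reduces to $\lambda_1'(T_*)\ne 0$, which follows from $\mu_1=2\pi/T$ and the simplicity $H'(\mu_*)\ne 0$ recorded above. The Crandall--Rabinowitz bifurcation theorem then produces the smooth curve $s\mapsto(T_s,\phi_s)$ of nontrivial zeros with $\phi_s=s\cos(t)+sv_s$, where $v_s$ is the higher-order Lyapunov--Schmidt correction chosen orthogonal to the kernel, giving precisely $\int_{-\pi}^{\pi}v_s(t)\cos(t)\,dt=0$; the $O(N-1)$-radial symmetry in $z$, the $T_s$-periodicity, and the evenness in $t$ of $u_s$ are all preserved because the entire construction is equivariant under these symmetries. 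The main technical obstacle is the spectral step: proving existence, uniqueness and simplicity of $\mu_*$ together with the quantitative bound $\mu_*<\sqrt{N-2}$ uniformly in $N\ge 4$ requires fairly sharp monotonicity and integral-representation estimates for ratios of modified Bessel functions. The hypothesis $N\ge 4$ enters in two essential ways here: it ensures that $u_1=|z|^{3-N}$ actually decays at infinity so the weighted-space setup is available, and it places $\nu=(N-3)/2\ge\tfrac12$ in the regime where $H(0^+)=+\infty$ and the root-existence argument goes through.
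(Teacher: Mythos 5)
Your proposal follows essentially the same route as the paper: pull the problem back to the straight cylinder $\O_1=\B_1^c\times\R$, work in weighted H\"older spaces adapted to the $|y|^{3-N}$ decay, reformulate as a nonlinear map $F(T,\phi)=0$, diagonalize the linearization on Fourier modes via modified Bessel functions of the second kind, and apply Crandall--Rabinowitz at the smallest positive root of the mode-one eigenvalue. Your linearization $L(T)$ and eigenvalue formula agree with the paper's (which writes $\lambda_k(T)=-(N-3)\bigl(N-2-\rho_k K_{\nu+1}(\rho_k)/K_\nu(\rho_k)\bigr)$, $\rho_k=2\pi k/T$) up to an overall sign and the recurrence $K_{\nu+1}=K_{\nu-1}+\tfrac{2\nu}{\rho}K_\nu$, so these are the same object.

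Two small points of divergence worth noting. First, the paper does not use a compactly supported diffeomorphism; it takes $(y,\tau)\mapsto\bigl((1+\phi(\tau)/|y|^2)y,\,T\tau/2\pi\bigr)$, whose algebraic $|y|^{-2}$ decay is chosen precisely so that $L_{T,\phi}$ maps $\mathcal{C}^{2,\alpha}_\mu$ into $\mathcal{C}^{0,\alpha}_{\mu-2}$ boundedly and smoothly in $\phi$; your compactly supported choice should also work but requires a cutoff and does not give as clean a formula for $L_{T,\phi}$. Second, for the spectral step you flag as the main obstacle, the paper does not establish (or need) global monotonicity of $\rho\mapsto\rho K_{\nu+1}(\rho)/K_\nu(\rho)$; it instead computes, with $Q=K_{\nu+1}/K_\nu$, that $-\Lambda'(\rho)=\rho Q^2-2\nu Q-\rho$, and applies the single known inequality $\rho K_{\nu+1}(\rho)/K_\nu(\rho)>\nu+\sqrt{\rho^2+\nu^2}$ at any zero of $\Lambda$. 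This at once forces $\rho<\sqrt{N-2}$ there, gives $\Lambda'(\rho)=\rho-(N-2)/\rho<0$ at every zero (hence uniqueness of the root and simplicity/transversality $\lambda_1'(T_*)\neq 0$), and yields $T_*>2\pi/\sqrt{N-2}$. That same inequality is in fact exactly the statement that $\rho Q^2-2\nu Q-\rho>0$ for all $\rho>0$, i.e.\ the global monotonicity you were appealing to, so there is no real obstacle here -- one Bessel-ratio inequality suffices. Your remaining remarks (the $k=0$ eigenvalue $N-3$, the $N=4$ check $\mu_*=1$, equivariance giving radial symmetry and evenness, the necessity of $N\ge4$ for the decay of $u_1$) all match the paper.
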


We point out that, for every $s\in (-{\e},{\e}) $, the domain   $\O_{ T_s, \varphi_s}$ in Theorem~\ref{Theo1} is exceptional with \emph{roof function} given by $\tilde{u}_s=1-u_s$ in $\O_{ T_s, \varphi_s}$. Indeed this function is positive in $\O_{ T_s, \varphi_s}$ since the harmonic function $u_s$ cannot attain a maximum in  $\O_{ T_s, \varphi_s}$  unless it is constant, which is excluded by  the boundary conditions in  \eqref{eq:solved-main-ND}.  Since in addition $u_s=1$ on $\partial \O_{ T_s, \varphi_s}$ and $u_s$ is $T_s$-periodic in the $t$ direction, it follows that $0<u_s<1$ and therefore $0 < \tilde{u}_s <1$ in  $\O_{ T_s, \varphi_s}$.

As already mentioned in the abstract, the domains in Theorem~\ref{Theo1} have a similar shape as those found in the recent work  \cite{Fall-MinlendI-Weth3} for the case $N=3$, but the underlying construction is completely complementary. In fact, the approach in \cite{Fall-MinlendI-Weth3} relies on specific properties of an integral representation of an associated Dirichlet-to-Neumann operator which is only available in the case $N=3$. On the other hand, our approach depends essentially on the assumption $N \ge 4$ (see e.g. Theorem~\ref{Oper-inv} and Proposition~\ref{outer2}). The difference between these two cases is reflected by the geometry of associated roof functions which are bounded for $N \ge 4$ and have a logarithmic growth in the distance from the cylinder axis in the case $N=3$. Clearly, these differences are related to the different nature of
the fundamental solution of $-\Delta$ in dimensions $N=2$ and $N \ge 3$.

Also related to  Theorem  \ref{Theo1} are some recent   results in  \cite{F.Morabito, Fall-MinlendI-Jesse}.  In \cite{F.Morabito}, Morabito obtained a family of bifurcation branches of domains which are small deformation of a solid cylinder in $\R^3$.  We note that  in contrast   to  \eqref{eq:solved-main-ND},  \cite{F.Morabito}  considers a non-constant  Neumann condition  involving  the mean curvature of the boundary.  Furthermore,  Fall, Minlend and Jesse  in \cite{Fall-MinlendI-Jesse} proved the existence of a foliation by perturbations of large coordinate sphere whose enclosures solve  \eqref{eq:solved-main-ND}  in asymptomatically  flat manifold. 

We should  also underline that  investigations toward the prototype problem  \eqref{eq1} in unbounded domains have been  mostly  motivated by  the  conjecture of Berestycki, Caffarelli and Nirenberg  \cite[p. 1110]{BeCahNi}  which states  that,  if  $\O$  is a  domain such that $\R^{N}\setminus \ov \O$ is connected, the existence  of a bounded positive solution to problem (\ref{eq1}) for some Lipschitz function $f$ implies  that  $\O$  should be a half-space, a ball, the complement of a ball, or a circular-cylinder-type domain $\R^j\times C$ (up to rotation and translation), where $C$ is a ball or a complement of a ball in $\R^{N-j}.$
This conjecture has been disproved for $N\geq3$ in \cite{S10}, where the author found a periodic perturbation of the straight cylinder $B^{N-1}\times\mathbb{R}$  supporting  a periodic solution  of (\ref{eq1}) with $f(u)=\lambda u, \lambda>0$. Further results  for   \eqref{eq1} have also  been obtained for instance in \cite{DPW15, Fall-MinlendI-Weth, RRS20, RSW21, SS12, RSW22}  and we highlight the recent contribution  by  Ruiz,  Sicbaldi and Wu \cite{SS12}, where bifurcation branches have been obtained  in onduloid type domains for a very general class of  nonlinearities $f : [0, +\infty)\to \R$.  Positive  results  addressing  the conjecture were found by Farina and Valdinoci  \cite{farina-valdinoci:2010-1} with the  assumptions  under which an  epigraph $\O$  admitting  a solution to  problem (\ref{eq1}) must be a half-space. In  \cite{RS13}, it is also shown that the conjecture remains valid in the plane for some classes of nonlinearities $f$.   Moreover,  \cite{RRS17} proves the above conjecture in dimension $2$ if $\partial\Omega$ is connected and unbounded.\\

We now explain the construction of the domains in  \eqref{eq:def-o-phi} while presenting the contents of the paper.  In Section  \ref{pullback}, we rephrase  the main problem   \eqref{eq:ovder} to the  equivalent problem (see  \eqref{eq:ovder2rephrased}-\eqref{eq:overcondi22}) on the fixed domain $\O_1=\B^c_{1} \times \R$.  We emphasize that our analysis strongly relies on the  decay assumption in  \eqref{eq:ovder}  which motivates the functional setting   in Section  \ref{sec:dirichlet}, where we work in  weighted H\"{o}lder  spaces. Furthermore,  we need   the pull back  operator in Lemma  \ref{eq:pulbal} to   map between these spaces. We do this  by  parametrizing  the set   $\O_{T,\varphi}$  as  in \eqref{eq:param}  with a suitably chosen diffeomorphism which minimizes the effect of the perturbation away from the boundary. In this functional analytic setting we are able to reformulate, in Section~\ref{sec:soln_construction}, our problem as a nonlinear operator equation of the form $F(T,\varphi)\equiv0$, to which we then apply the Crandall-Rabinowitz bifurcation theorem (see \cite{M.CR}). For this it is necessary to compute the linearised operator  $D_\varphi F (T,0)$ and analyze its spectral properties, which we do in Section \ref{linearizaton}. The proof of Theorem~\ref{Theo1} is then completed in Section~\ref{sec:Main result}. The paper ends with an appendix  where we collect some useful scale-invariant H\"{o}lder estimates for solutions of the Poisson equation and properties of modified Bessel functions.\\

\noindent \textbf{Acknowledgements}:
I.A.M. is   supported by the Alexander von Humboldt foundation and  J.W. is  supported by the China Scholarship Council (CSC201906290013) and by J. Andalucia (FQM-116).
Part of this work was  carried  out when  I.A.M.  and J.W. were visiting the Goethe University Frankfurt am Main. They are grateful to the Mathematics department  for the hospitality.

\section{The pull-back problem}\label{pullback}
We begin by fixing some notation. For $k \in \mathbb{N }\cup \{0\}$, we let
$$
C^{k,\alpha}_{p, e}( \R)
:= \left \{ u \in  C^{k,\alpha}( \R): \quad  u \textrm{ is  $2\pi$-periodic and even}\right \}.
$$
Moreover, we consider the open set
$$
\mathcal{U}:=\{\varphi \in  C^{2,\alpha}_{p,e}(\R):  \|\phi\|_\infty<1 \}.
$$
Recalling our problem, we are looking for a  number $T>0$ and a nonconstant function $\phi \in \mathcal{U}$ such that the overdetermined problem
\begin{align}\label{eq:ovder3}
\begin{cases}
\Delta   u    = 0& \quad  \textrm { in } \quad   \Omega_{T, \varphi} \\
   u  =  1& \quad\textrm{ on } \quad\partial  \Omega_{T, \varphi} \\
 \lim \limits_{|z| \rightarrow \infty}  u (z, t )  =  0& \quad \textrm{uniformly in $t \in \R$}\\
  \dfrac{\partial  u } {\partial \eta}= N-3,&\quad
\textrm { on } \quad \partial \Omega_{T, \varphi}
\end{cases}
 \end{align}
is solvable in  the perturbed domain  $\Omega_{T, \varphi}$ defined in  \eqref{eq:def-o-phi}.


In order to find a suitable variational framework for this problem, it is convenient to pull-back \eqref{eq:ovder3} on the fixed domain $\O_1:=\B^c_1\times \R$ via a suitable diffeomorphism of the form
\begin{equation}
  \label{eq:general-diff}
\O_1  \to   \Omega_{T, \varphi},\qquad (y, \t)  \mapsto  \Bigl((1  + \phi(\tau)|y|^s)y,\frac{T}{2\pi}\tau\Bigr).
\end{equation}
Note that, since the function $r \mapsto (1+c r^s)r$ is strictly increasing on $(1,\infty)$ for $|c| \le 1$ and $0\geq s \ge -2$, it is easy to see that (\ref{eq:general-diff}) defines a diffeomorphism if $T>0$, $0\geq s \ge -2$ and $\|\phi\|_\infty \le 1$. It will turn out to be important in our functional analytic framework to minimize the effect of $\phi$ for large values of $|y|$, which leads us to choose $s = -2$ in the following. Hence we consider, for $T>0$ and  a $2 \pi$-periodic positive  function $\phi \in C^{2,\alpha}(\R)$ with $\|\phi\|_\infty<1$, the diffeomorphism
\begin{equation}\label{eq:param}
\Psi_{T,\varphi}:   \O_1  \to   \Omega_{T, \varphi}, \quad  (y, \t)  \mapsto \bigl(\kappa ( |y|^2,  \varphi(\tau))y, \frac{T}{2\pi}\tau\bigr)
\end{equation}
with
\begin{align}\label{eq:defkappa}
\kappa(a,  b)=1+\frac{b}{a},   \qquad \text{for $a \geq 1$ and $|b| \le 1$ .}
\end{align}
Furthermore,
\begin{align}\label{ebiject}
 z=\kappa ( |y|^2,b)y \Longleftrightarrow y= \zeta( |z|^2,b)z,
\end{align}
where $\zeta$ is the unique function given by
\begin{align}\label{zetafunc}
\zeta( a , b)=\frac{1}{2}+\sqrt{\frac{1}{4}-\frac{b}{a}}, \qquad \text{for $|b| \le 1$ and $a \ge 4 b.$}
\end{align}

In order pull back the problem \eqref{eq:ovder3} on $\O_1,$ we consider the  ansatz
\begin{align}\label{eqans}
u(z,t)= w(\zeta( |z|^2,\varphi(\frac{2\pi}{T}t))z,\frac{2\pi}{T}t) =w(y, \tau) \qquad \text{for some function $w:  \O_1   \to \R$,}
\end{align}
and we  look for  the operator $L_{T, \varphi}$ such that
\begin{align}\label{eqrpulll}
L_{T, \varphi} w(y, \tau ) = \Delta u(z,t) \qquad \text{for $(z,t) \in  \Omega_{T, \varphi}$}
\end{align}
where $\tau=\frac{2\pi}{T} t$. To write down the operator $L_{T, \varphi}$ explicitly, we need the partial derivatives $\zeta_i = \partial_i \zeta$, $\zeta_{ii} = \partial_{ii} \zeta$, $i=1,2$ of the function $\zeta$ in (\ref{zetafunc}), which are given as follows for $|b| < 1$ and $a > 4 b$:
\begin{align}\label{derivativeszet}
\begin{cases}
\zeta_1(a,b)=\dfrac{b}{2a^2(\zeta(a,b)-\frac{1}{2})}\\
\zeta_{11}(a,b)=-\dfrac{b}{a^3(\zeta(a,b)-\frac{1}{2})}\biggl(1+\dfrac{b}{4a(\zeta(a,b)-\frac{1}{2})^2}\biggl)\\
\zeta_2(a,b)=-\dfrac{1}{2a(\zeta(a,b)-\frac{1}{2})}\\
\zeta_{22}(a,b)=-\dfrac{1}{4a^2(\zeta(a,b)-\frac{1}{2})^3},
\end{cases}
 \end{align}

\begin{Lemma}\label{eq:pulbal} 
For every  $T>0$ and  $\varphi \in \mathcal{U}$, the  operator $L_{T, \varphi} $ in \eqref{eqrpulll} is given by
\begin{align}\label{eqropepulbackkk}
L_{T, \varphi}w &= \zeta^2 \Delta_y w+\left(\frac{2\pi}{T}\right)^2\frac{\partial^2 w}{ \partial \tau^2}+  \frac{4 }{\zeta^2}  \Biggl ( \zeta_{1} \Bigl(\zeta+  |y|^2  \frac{\zeta_{1} }{\zeta^2}\Bigl) + \Bigl(\frac{2\pi}{T}\Bigl)^2   \zeta^2_{2} \varphi'^2    \Biggl)   \sum^{N-1}_{k, \ell=1}  y_\ell  y_k  \frac{\partial^2  w}{\partial y_\ell \partial y_k}\nonumber\\
&\quad+\Biggl(2(N+1)\frac{\zeta_1}{\zeta}+ 4  \frac{\zeta_{11} }{\zeta^3}|y|^2+ \Bigl(\frac{2\pi}{T}\Bigl)^2 \Bigl( \frac{ \varphi'' \zeta_{2}+ \varphi'^2 \zeta_{22} }{\zeta}   \Bigl)\Biggl)y \cdot \nabla_y w \nonumber\\
                &\quad+\left(\frac{2\pi}{T}\right)^2  \frac{\zeta_{2} \varphi' }{\zeta}  \sum^{N-1}_{\ell=1}  y_\ell \frac{\partial^2  w}{\partial y_\ell \partial \tau},
\end{align}
where, for abbreviation, we merely write $\zeta$ in place of the function
\begin{equation}
\label{eq:zeta-replaced-y-tau}
(y,\tau) \mapsto \zeta(|\kappa(|y|^2,\phi(\tau))y|^2,\phi(\tau))
\end{equation}
and similarly for the partial derivatives $\zeta_i$, $\zeta_{ii}$, $i=1,2$.
\end{Lemma}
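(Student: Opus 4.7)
The plan is a direct chain-rule computation applied to the composition $u(z,t) = w(y(z,t),\tau(t))$, where $y_i = \zeta(|z|^2,\varphi(\tau))\,z_i$ for $i=1,\dots,N-1$ and $\tau = \frac{2\pi}{T}t$. I will compute $\sum_{k=1}^{N-1}\partial_{z_k}^2 u$ and $\partial_t^2 u$ separately, add them, and then use the relation $z_i = y_i/\zeta$ (equivalently $|z|^2 = |y|^2/\zeta^2$, which follows from \eqref{ebiject}) to re-express every coefficient in the $(y,\tau)$ variables and to recognize the five blocks appearing in \eqref{eqropepulbackkk}.

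The first step is to record the Jacobian entries. Since $\zeta$ depends on $z$ only through $|z|^2$, one obtains
$$
\frac{\partial y_i}{\partial z_k}=\zeta\,\delta_{ik}+2z_kz_i\,\zeta_1,\qquad
\frac{\partial^2 y_i}{\partial z_k^2}=4z_i\zeta_1\,\delta_{ik}+2z_i\zeta_1+4z_i z_k^2\,\zeta_{11}.
$$
On the $t$-side, $\zeta$ depends on $t$ only through $\varphi(\tau)$, so $\partial_t y_i=\frac{2\pi}{T}\varphi'\zeta_2\,z_i$ and $\partial_t^2 y_i=\bigl(\frac{2\pi}{T}\bigr)^2 z_i\bigl(\varphi''\zeta_2+(\varphi')^2\zeta_{22}\bigr)$, together with $\partial_t\tau=\frac{2\pi}{T}$ and $\partial_t^2\tau=0$. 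Crucially, $\partial_{z_k}\tau\equiv 0$, so no mixed $\partial_{y_i\tau}w$ term enters $\sum_k\partial_{z_k}^2u$.

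The second step is the chain-rule expansion. Writing
$$
\partial_{z_k}^2 u = \sum_{i,j}\frac{\partial y_i}{\partial z_k}\frac{\partial y_j}{\partial z_k}\partial_{y_iy_j}w+\sum_i\frac{\partial^2 y_i}{\partial z_k^2}\partial_{y_i}w
$$
and summing over $k$, the coefficient of $\partial_{y_iy_j}w$ becomes $\zeta^2\delta_{ij}+4\zeta\zeta_1\,z_iz_j+4\zeta_1^2\,|z|^2\,z_iz_j$. The diagonal piece gives the $\zeta^2\Delta_yw$ block, and the remainder, combined with the $\bigl(\frac{2\pi}{T}\bigr)^2\zeta_2^2(\varphi')^2 z_iz_j$ produced by $\partial_t^2 u$, yields the coefficient of $\sum y_ky_\ell\partial_{y_ky_\ell}w$ in \eqref{eqropepulbackkk} after the substitution $z_i=y_i/\zeta$. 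The $\partial_t^2 u$ expansion also contributes the $\bigl(\frac{2\pi}{T}\bigr)^2\partial_\tau^2 w$ term and, via $2\sum_i\partial_ty_i\,\partial_t\tau\,\partial_{y_i\tau}w$, the mixed block $\bigl(\frac{2\pi}{T}\bigr)^2\frac{\zeta_2\varphi'}{\zeta}\sum_\ell y_\ell\partial_{y_\ell\tau}w$.

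Finally I collect the first-order terms. Summing $\sum_k\partial_{z_k}^2 y_i$ over $k$ gives $2(N+1)z_i\zeta_1+4|z|^2 z_i\zeta_{11}$; together with $\partial_t^2 y_i$ and rewritten via $z=y/\zeta$, this becomes precisely the coefficient of $y\cdot\nabla_y w$ shown in \eqref{eqropepulbackkk}. The computation is conceptually routine; the main obstacle is bookkeeping, specifically the careful treatment of the $k=i$ contribution in $\partial^2 y_i/\partial z_k^2$ and the reduction $|z|^2\zeta_{11}/\zeta = |y|^2\zeta_{11}/\zeta^3$ needed for the coefficient of $y\cdot\nabla_y w$ to match exactly. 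Since $\zeta_1,\zeta_{11},\zeta_2,\zeta_{22}$ are given explicitly by \eqref{derivativeszet}, no further manipulation of $\zeta$ is required once the groupings are made, and the identity \eqref{eqropepulbackkk} follows.
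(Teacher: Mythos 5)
Your computation follows essentially the same route as the paper's own proof: record the Jacobian entries $\partial y_i/\partial z_k$ and $\partial y_i/\partial t$, apply the chain rule to expand $\sum_k\partial_{z_k}^2 u$ and $\partial_t^2 u$, and regroup after substituting $z=y/\zeta$, $|z|^2=|y|^2/\zeta^2$; there is no difference in strategy, only in how the blocks are labelled.

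One bookkeeping point deserves attention, since it is precisely where the paper itself slips. Your chain-rule expansion correctly records the mixed contribution as $2\sum_\ell\partial_t y_\ell\,\partial_t\tau\,\partial_{y_\ell\tau}w$, which after substitution equals $2\bigl(\tfrac{2\pi}{T}\bigr)^2\tfrac{\zeta_2\varphi'}{\zeta}\sum_\ell y_\ell\partial_{y_\ell\tau}w$; yet you then declare it matches the block $\bigl(\tfrac{2\pi}{T}\bigr)^2\tfrac{\zeta_2\varphi'}{\zeta}\sum_\ell y_\ell\partial_{y_\ell\tau}w$ in \eqref{eqropepulbackkk}, dropping the factor $2$ without comment. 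The paper's own derivation of the $(K)$ block omits this $2$ as well, and \eqref{eqropepulbackkk} inherits it. Similarly, the paper's $(J)$ block gives a coefficient $\bigl(\tfrac{2\pi}{T}\bigr)^2\tfrac{\zeta_2^2\varphi'^2}{\zeta^2}$, whereas in \eqref{eqropepulbackkk} this term sits inside the overall prefactor $\tfrac{4}{\zeta^2}$, producing an extra factor of $4$. If you intend to claim your computation reproduces \eqref{eqropepulbackkk} exactly, you should either carry the $2$ through and note the discrepancy, or point out that both offending terms carry a factor $\varphi'$ or $\varphi'^2$ and hence vanish when the operator is evaluated at $\varphi\equiv 0$, which is the only case used downstream in the linearization; the subsequent analysis is therefore unaffected.
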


\begin{proof}
We set $$y_\ell(z,t):= \zeta( |z|^2,\varphi(\frac{2\pi t}{T}))z_\ell, \quad \ell=1,\cdots,N-1.$$
Then, on $\Omega_{T,\phi}$, we find  after computation,  
\begin{align}\label{eqpartiay}
\frac{\partial y_\ell }{\partial z_i }&=2 z_i  \zeta_1 z_\ell+ \zeta \delta_{i\ell}\nonumber\\
\frac{\partial^{2} y_\ell }{\partial z^{2}_i}&= 2  \frac{\zeta_1}{\zeta} y_\ell+4 y^{2}_i \frac{\zeta_{11}}{\zeta^3}  y_\ell+ 4 \frac{\zeta_1}{\zeta}   \delta_{i\ell} y_i,
\end{align}
where, here and in the following, we simply write $\zeta$ in place of the function
\begin{equation}
  \label{eq:zeta-replaced-z-t}
(z,t) \mapsto \zeta( |z|^2,\varphi(\frac{2\pi t}{T}))
\end{equation}
and similarly for $\zeta_i$, $\zeta_{ii}$, $i=1,2$. We  also have  $$ \frac{\partial  }{\partial t } \Bigl(\frac{\zeta_{2}(\tau) }{\zeta(\tau)}\Bigl)= \Bigl(\frac{2\pi}{T}\Bigl) \frac{\partial  }{\partial \t } \Bigl(\frac{\zeta_{2}(\tau) }{\zeta(\tau)}\Bigl)=\Bigl(\frac{2\pi}{T}\Bigl) \frac{\varphi'}{\zeta}\Bigl( \zeta_{22}-\frac{\zeta^2_{2}}{\zeta}\Bigl)$$
and hence
\begin{align}\label{eqpartiat}
\frac{\partial y_\ell }{\partial t }&= \frac{2\pi}{T}\varphi' \zeta_{2}z_\ell=\frac{2\pi}{T}\varphi' \frac{\zeta_{2}}{\zeta} y_\ell\nonumber\\
\frac{\partial^2 y_\ell }{\partial t^2 }&= \Bigl(\frac{2\pi}{T}\Bigl)^2 \Bigl(\varphi'' \frac{\zeta_{2}}{\zeta}  +\varphi'  \frac{\partial  }{\partial \t } \Bigl(\frac{\zeta_{2}(\tau) }{\zeta(\tau)}\Bigl)+ \varphi'^2 \frac{\zeta^2_{2} }{\zeta^2}\Bigl)y_\ell= \Bigl(\frac{2\pi}{T}\Bigl)^2 \Bigl( \frac{ \varphi'' \zeta_{2}+ \varphi'^2 \zeta_{22} }{\zeta}   \Bigl)y_\ell
\end{align}

Next we compute, on $\Omega_{T,\phi}$,
\begin{align*}
\frac{\partial u }{\partial z_i }&=\sum^{N-1}_{\ell=1} \frac{\partial y_\ell }{\partial z_i}
\Bigl(\frac{\partial w}{\partial y_\ell} \circ \Psi_{T,\phi}^{-1} \Bigr) \nonumber\\
\frac{\partial^{2}  u }{\partial z^{2}_i}&=  \sum^{N-1}_{\ell=1} \frac{\partial^2 y_\ell }{\partial z^2_i}\Bigl(
\frac{\partial w}{\partial y_\ell} \circ \Psi_{T,\phi}^{-1} \Bigr)  +\sum^{N-1}_{k, \ell=1} \frac{\partial y_\ell }{\partial z_i} \frac{\partial y_k }{\partial z_i} \Bigl(\frac{\partial^2  w}{\partial y_\ell \partial y_k} \circ \Psi_{T,\phi}^{-1} \Bigr)=:(A)+(B).
\end{align*}

Using  \eqref{eqpartiay}, we find
\begin{align*}
(A):&= \Bigl(2\frac{\zeta_1}{\zeta}+ 4 \frac{\zeta_{11} }{\zeta^3}y_{i}^2\Bigl)y\cdot \Bigl( \nabla_y w \circ \Psi_{T,\phi}^{-1} \Bigr)  +4\frac{\zeta_1}{\zeta} y_{i}\Bigl(\frac{\partial  w}{\partial y_i} \circ \Psi_{T,\phi}^{-1} \Bigr) \nonumber \\
(B):&=  \zeta^2  \Bigl( \frac{\partial^2  w}{ \partial y_{i}^{2}} \circ \Psi_{T,\phi}^{-1} \Bigr)+   4 \frac{\zeta_{1} }{\zeta}\sum^{N-1}_{k=1}  y_{i} y_k  \Bigl(\frac{\partial^2  w}{\partial y_i \partial y_k} \circ \Psi_{T,\phi}^{-1} \Bigr) +  4 y_{i}^2  \frac{\zeta_{1}^{2}}{\zeta^4}  \sum^{N-1}_{k, \ell=1}  y_\ell  y_k  \Bigl( \frac{\partial^2  w}{\partial y_\ell \partial y_k}\circ \Psi_{T,\phi}^{-1} \Bigr).
\end{align*}

In addition, \begin{align*}
\frac{\partial u }{\partial t }&=\sum^{N-1}_{\ell=1} \frac{\partial y_\ell }{\partial t}
\Bigl(\frac{\partial w}{\partial y_\ell} \circ \Psi_{T,\phi}^{-1} \Bigr) +\frac{2\pi}{T} \Bigl(\frac{\partial w}{ \partial \tau}\circ \Psi_{T,\phi}^{-1} \Bigr) \nonumber\\
\frac{\partial^{2}  u }{\partial t^{2}}&=  \sum^{N-1}_{\ell=1} \frac{\partial^2 y_\ell }{\partial t^2}
\Bigl(\frac{\partial w}{\partial y_\ell}\circ \Psi_{T,\phi}^{-1} \Bigr) +\sum^{N-1}_{k, \ell=1} \frac{\partial y_\ell }{\partial t} \frac{\partial y_k }{\partial t} \Bigl(\frac{\partial^2  w}{\partial y_\ell \partial y_k} \circ \Psi_{T,\phi}^{-1} \Bigr)+  \frac{2\pi}{T}\sum^{N-1}_{\ell=1} \frac{\partial y_\ell }{\partial t}
                                         \Bigl(\frac{\partial^2 w}{ \partial \tau\partial y_\ell} \circ \Psi_{T,\phi}^{-1} \Bigr)\\
               &+\left(\frac{2\pi}{T}\right)^2\Bigl( \frac{\partial^2 w}{ \partial \tau^2} \circ \Psi_{T,\phi}^{-1} \Bigr)\nonumber\\
&  =:(I)+(J)+(K).
\end{align*}
We now use \eqref{eqpartiat} and find
\begin{align*}
(I)&=  \Bigl(\frac{2\pi}{T}\Bigl)^2 \Bigl( \frac{ \varphi'' \zeta_{2}+ \varphi'^2 \zeta_{22} }{\zeta}   \Bigl)  y \cdot \Bigl(\nabla_y w \circ \Psi_{T,\phi}^{-1} \Bigr) \nonumber\\
(J)&= \Bigl(\frac{2\pi}{T}\Bigl)^2   \frac{\zeta^2_{2} \varphi'^2 }{\zeta^2}  \sum^{N-1}_{k,\ell=1}  y_k  y_\ell \Bigl( \frac{\partial^2  w}{\partial y_k y_\ell} \circ \Psi_{T,\phi}^{-1} \Bigr)\\
(K)&= \frac{2\pi}{T}\sum^{N-1}_{\ell=1} \frac{\partial y_\ell }{\partial t}
     \Bigl(\frac{\partial^2 w}{ \partial \tau\partial y_\ell} \circ \Psi_{T,\phi}^{-1} \Bigr) +\left(\frac{2\pi}{T}\right)^2\Bigl( \frac{\partial^2 w}{ \partial \tau^2} \circ \Psi_{T,\phi}^{-1} \Bigr)\\
  &=\left(\frac{2\pi}{T}\right)^2 \left[ \frac{\zeta_{2} \varphi' }{\zeta}  \sum^{N-1}_{\ell=1}  y_\ell \Bigl(\frac{\partial^2  w}{\partial y_\ell \partial \tau}\circ \Psi_{T,\phi}^{-1} \Bigr) +\Bigl(\frac{\partial^2 w}{ \partial \tau^2}\circ \Psi_{T,\phi}^{-1} \Bigr) \right]\nonumber.
\end{align*}
Collecting these identities, which we have derived on the domain $\Omega_{T,\phi}$ in the variables $(z,t)$, and passing to the variables $(y,\tau) \in \Omega_1$, we obtain the claim. Note here that we have to write $(z,t)= \bigl(\kappa ( |y|^2,  \varphi(\tau))y, \frac{T}{2\pi}\tau\bigr)$ to pass from (\ref{eq:zeta-replaced-z-t}) to (\ref{eq:zeta-replaced-y-tau}) and similarly for the partial derivatives $\zeta_i, \zeta_{ii}$, $i=1,2$.
\QED
\end{proof}

We now use Lemma \ref{eq:pulbal} to rephrase the original problem  \eqref{eq:ovder3} in the fixed domain
$ \B^c_{1} \times \R $. We recall that the  boundary  $\partial \O_{T, \varphi}$ of   $ \O_{T, \varphi}$  is given by
\begin{align}\label{boudary}
\partial \O_{T, \varphi} =\{(1+\varphi(\t))\sigma,\frac{T}{2\pi}\tau): \sigma\in\mathbb{S}^{N-2},t\in\mathbb{R}\}
\end{align}
 and  its  outer normal vector field  is given by
 \begin{align}\label{eq:outer2}
\Upsilon((1+\varphi(\t))\sigma,\frac{T}{2\pi}\tau) =\frac{\Bigl(-\sigma,  2 \pi \varphi'(\t)/T  \Bigl)}{\sqrt{1+ \Bigl(\frac{2\pi}{T} \Bigl)^2 \varphi'^2(\t)}}.
 \end{align}
Let the metric $g_{T,\varphi}$ be  defined as the pull back of the euclidean metric $g_{eucl}$ under the map $\Psi_{T,\varphi}$, so that $\Psi_{T,\varphi}:(\overline \O_{1} ,  g_{T,\varphi} ) \to (\overline   {\Omega_{T, \varphi}},g_{eucl})$ is an isometry.
Denote  by
$$
\eta_{T,\varphi}:  \partial \O_1 \to  \R^{N}
$$ the unit outer normal vector field on $\partial  \Omega_1 $ with respect
to $g_{T,\varphi} $.
Since  $\Psi_{T,\varphi}:(\overline \O_{1},  g_{T,\varphi} ) \to (\overline   {\Omega_{T, \varphi}} ,g_{eucl})$ is an isometry,  we have
\begin{equation}
  \label{eq:rel-mu-phi-nu-phi}
\eta_{T,\varphi} = [d \Psi_{T,\varphi}]^{-1}  \Upsilon  \circ \Psi_{T,\varphi}  \qquad
\text{on $\partial  \Omega_1 $.}
\end{equation}

Moreover, by (\ref{eq:rel-mu-phi-nu-phi}) we have
$\mu_\varphi(\Psi_\varphi (y,\tau) )=d\Psi_\varphi (y, \tau) \eta_\varphi (y, \tau) $ and therefore
\begin{align*}
\partial_{\eta_{T,\varphi}} w (y,\tau) &= d w (y, \tau) \eta_{\varphi} (y,\tau) =d u (\Psi_{\varphi}(y,\tau))  d \Psi_{\varphi}(y,\tau)  \eta_{T,\varphi} (y, \tau)\\
&= d u (\Psi_{\varphi}(y, \tau))  \Upsilon (\Psi_\varphi (y, \tau) )
  = \langle  \Upsilon (\Psi_\varphi (y, \tau) ), (\nabla_{ (z, t)}u) (\Psi_{\varphi}(y, \tau)) \rangle_{g_{eucl}}.
\end{align*}
That is
\begin{align}\label{norde11bac}
\partial_{\eta_{T,\varphi}} w (y,\tau)  &= \langle   \Upsilon (\Psi_\varphi (y, \tau) ), (\nabla_{ (z, t)}u) (\Psi_{\varphi}(y, \tau)) \rangle_{g_{eucl}}
\end{align}
where $u$ and $w$ are related by \eqref{eqans}.

Taking into  account \eqref{boudary}, our aim is to show that  for some values of the parameter $T>0$ and  $\varphi>-1$, we can find a solution $w$ to the overdetermined boundary value problem

\begin{align}\label{eq:ovder2rephrased}
\begin{cases}
L_{T, \varphi}w = 0& \quad  \textrm { in } \quad   \B^c_{1} \times \R  \vspace{3mm} \\
 w =  1& \quad\textrm{ on } \quad\partial (\B^c_{1} \times \R )\vspace{3mm}\\
 \lim \limits_{|y| \rightarrow \infty}  w (y, \t ) = 0& \quad \textrm{uniformly in $\t \in \R$}
\end{cases}
 \end{align}
 and
 \begin{align}\label{eq:overcondi22}
 \dfrac{\partial  w } {\partial \eta_{T,\varphi}}= N-3,&\quad
\textrm { on } \quad  \partial (\B^c_{1} \times \R ).
 \end{align}

We start proving the existence of solution to problem \eqref{eq:ovder2rephrased} by analysing the operator  $L_{T, \varphi}$ in    weighted  H\"{o}lder  spaces.

\section{Analysis of the operator $L_{T, \varphi}$ on   weighted H\"{o}lder  spaces } \label{sec:dirichlet}

Our  setting  in analysing the operator  $L_{T, \varphi}$ is that of  the  weighted H\"{o}lder  spaces introduced  by Pacard and Rivi\`ere \cite{PR}. We emphasise that  Pacard and Rivi\`ere  performed their analysis  on  $\B_1\backslash \{ 0 \}$  whereas in contrast we wish  to carry our study on the open set   $\Omega_1=\B^c_{1} \times \R $. More generally, we consider, for $r > 0$, the
sets
$$
\Omega_{r}:=\Big\{(y, \t )\in  \R^{N-1} \times \R:  |y|> r\}\subset\R^N,
$$
and we set $\Omega_0=\R^N$. Let $\alpha \in (0,1)$ be fixed in the following.

\begin{Definition}
\label{main-definition-hoelder-etc}
Let $\mu <0$, $k \in \mathbb{N}$.
\begin{enumerate}
\item[(i)] For a set $K \subset \R^N$ and a function $v \in C^{0,\alpha}(K)$ we put
  $$
  [v]_{C^{0,\alpha}(K)}:= \sup_{y,y' \in K}\frac{|v(y) - v(y')|}{|y - y'|^{\alpha}}
  $$
  and
  $$
  \|v\|_{C^{0,\alpha}(K)}:= \|v\|_{L^\infty(K)}+[v]_{C^{0,\alpha}(K)}.
  $$
\item[(ii)] We say $u \in
\mathcal{C}^{k,\alpha}_\mu( \ov \Omega_r)$ if $u \in
\mathcal{C}^{k,\alpha}_{\textrm{loc}}(\ov \Omega_r)$ and
$$\| u \|_{k,\alpha,\mu} = \sup_{s > r} \left ( s^{-\mu} [ u ]_{k,\alpha, s}  \right ) < \infty,$$
where $A_s = \{ (y, \tau)\in \R^{N-1}\times \R : \quad  s  \leqslant |y| \leqslant 2s\}$ \quad for $s >0 $ and
$$ [ u ]_{k,\alpha, s} := \sum_{i = 0}^k s^i \|\nabla^i u\|_{L^\infty(A_s)}   +s^{k+\alpha} [\nabla^k u]_{C^\alpha(A_s)}.
$$
\item[(iii)] We also define the following function spaces:
  \begin{align*}
   \mathcal{C}^{k,\alpha}_{\mu, \mathcal{D}}( \O_r)
&:= \left \{ u \in \mathcal{C}^{k,\alpha}_\mu(\O_r):  \quad  u|_{\partial \O_r} = 0 \right \}\\
\mathcal{C}^{k,\alpha}_{\mu,p, e}( \O_r)
    &:= \left \{ u \in  \mathcal{C}^{k,\alpha}_{\mu}( \O_r): \quad  u \textrm{ is  $2\pi$-periodic and even in the coordinate $t$}\right \},\\
\mathcal{C}^{k,\alpha}_{\mu, \mathcal{D}, p, e}( \O_r)
    &:= \left \{ u \in  \mathcal{C}^{k,\alpha}_{\mu, \mathcal{D}}( \O_r): \quad  u \textrm{ is  $2\pi$-periodic and even in the coordinate $t$}\right \}.
  \end{align*}
\end{enumerate}
\end{Definition}

\begin{Remark}
  \label{quick-est}
Let $r \ge 0$ and $u \in  \mathcal{C}^{k,\alpha}_{\mu}( \O_r)$. By definition, we then have
$$
\sup_{A_s} | u| \leqslant   [ u ]_{k,\alpha, s}\leqslant  s^\mu  \| u \|_{k,\alpha,\mu} \qquad \text{for all $s > r$}
$$
and therefore, in particular,
\begin{align}\label{eq estiA}
| u (y, \t)| \leqslant | y|^\mu  \| u \|_{k,\alpha,\mu} \qquad \text{for all $(y,\t) \in \Omega_r$}.
\end{align}
Consequently,
\begin{equation}
  \label{eq:asymptotic-decay}
|u (y, \t)| \to 0  \qquad \text{as $|y| \to \infty$ uniformly in $\t \in \R$ if $\mu < 0$.}
\end{equation}
\end{Remark}

It can be deduced from the specific form of the operator $L_{T, \varphi}$ given in Lemma  \ref{eq:pulbal} that $L_{T, \varphi}$ maps $\mathcal{C}^{2,\alpha}_{\mu, \mathcal{D}, p, e}( \O_1)$ to  $\mathcal{C}^{0,\alpha}_{\mu-2, p, e}( \O_1)$. The following is the main result of this section. Here and in the following, for two Banach spaces $X$  and $Y$, we let $\mathcal{L}(X, Y)$ denote the space of bounded linear operators from $X$ to $Y$, and $\Iso(X, Y)$ the subset of topological isomorphisms $X \to Y$.

 \begin{Theorem}\label{Oper-inv}
   Let $3-N < \mu < 0$. Then we have a smooth map
   \begin{equation}
     \label{eq:smooth-map}
   (0, +\infty )\times  \mathcal{U}\to
 \mathcal{L}( \mathcal{C}^{2,\alpha}_{\mu, p, e }(\O_1), \mathcal{C}^{0,\alpha}_{\mu-2, p, e }(\O_1)),\qquad (T,\varphi) \to L_{T, \varphi}.
   \end{equation}
   Moreover, there exists an open neighborhood $\cO \subset  (0, +\infty )\times  \mathcal{U}$ of $(0,\infty) \times \{0\}$ with the property that
\begin{equation}
  \label{eq:Iso-property}
L_{T, \varphi}^D:= L_{T, \varphi}\Big|_{\mathcal{C}^{k,\alpha}_{\mu, \mathcal{D}, p, e}(\O_1)}   \: \in \: \Iso(\mathcal{C}^{2,\alpha}_{\mu, \mathcal{D}, p, e}(\O_1), \mathcal{C}^{0,\alpha}_{\mu-2, p, e }(\O_1))
\qquad \text{for $(T,\varphi) \in \cO$.}
\end{equation}
\end{Theorem}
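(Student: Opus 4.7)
The plan is to split the proof into three steps. First, I will verify that the assignment $(T,\varphi)\mapsto L_{T,\varphi}$ is smooth into $\mathcal{L}(\mathcal{C}^{2,\alpha}_{\mu,p,e}(\O_1),\mathcal{C}^{0,\alpha}_{\mu-2,p,e}(\O_1))$. Reading off Lemma~\ref{eq:pulbal}, the coefficients of $L_{T,\varphi}$ are algebraic combinations of $T$, $\varphi'$, $\varphi''$ and of $\zeta, \zeta_i, \zeta_{ii}$ evaluated along $(|\kappa(|y|^2,\varphi(\tau))y|^2,\varphi(\tau))$. The identities \eqref{derivativeszet} yield the asymptotic decays $\zeta_1=O(|y|^{-4})$, $\zeta_{11}=O(|y|^{-6})$, $\zeta_2=O(|y|^{-2})$, $\zeta_{22}=O(|y|^{-4})$ uniformly for $\varphi$ in bounded subsets of $\mathcal{U}$; inserted into \eqref{eqropepulbackkk}, these imply that every term of $L_{T,\varphi}$ maps $\mathcal{C}^{2,\alpha}_{\mu,p,e}(\O_1)$ boundedly into $\mathcal{C}^{0,\alpha}_{\mu-2,p,e}(\O_1)$. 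Smoothness in $(T,\varphi)$ in the operator-norm topology then follows from the standard smoothness of composition operators $\varphi\mapsto F(\cdot,\varphi(\cdot))$ between H\"older spaces, combined with the manifestly smooth dependence on $T$.

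Second, I will establish that $L^D_{T,0}$ is an isomorphism for every $T>0$. Setting $\varphi\equiv 0$ gives $\zeta\equiv 1$, $\zeta_1\equiv\zeta_{11}\equiv 0$, $\varphi'=\varphi''=0$, so \eqref{eqropepulbackkk} collapses to $L_{T,0}w = \Delta_y w + (2\pi/T)^2 \partial_\tau^2 w$. To invert under zero Dirichlet data I perform a Fourier decomposition in $\tau$: writing $w=\sum_{k\ge 0} w_k(y)\cos(k\tau)$ and $f=\sum_{k\ge 0}f_k(y)\cos(k\tau)$, the equation $L_{T,0}w=f$ reduces to the sequence of scalar exterior problems
\[
\Delta_y w_k - (k\lambda_T)^2 w_k = f_k \quad \text{in } \B_1^c\subset\R^{N-1},\qquad w_k|_{\partial \B_1}=0,\qquad \lambda_T:=\tfrac{2\pi}{T}.
\]
For $k=0$ this is the exterior Dirichlet problem for the Laplacian on the complement of the unit ball in $\R^{N-1}$; since $N-1\ge 3$, the weighted H\"older theory of Pacard--Rivi\`ere type (to be recorded in the appendix) furnishes a unique solution with the required decay precisely in the isomorphism range $(2-(N-1),0)=(3-N,0)$, which coincides with our hypothesis on $\mu$. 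For $k\ge 1$ the zero-order term has positive sign, and a representation of the radial solutions through the modified Bessel functions $K_{(N-3)/2}$ (whose properties are recalled in the appendix) yields exponentially decaying solutions with $k$-uniform Schauder bounds. Summing the Fourier series then produces the bounded inverse.

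Finally, since $\Iso(X,Y)$ is open in $\mathcal{L}(X,Y)$ and step one supplies continuity of $(T,\varphi)\mapsto L^D_{T,\varphi}$, each pair $(T_0,0)$ has an open neighborhood mapped into $\Iso(X,Y)$, and the union of these neighborhoods is the required $\cO$. The main obstacle lies in step two: the bounded harmonic function $|y|^{3-N}$ on the exterior of $\B_1\subset\R^{N-1}$ would obstruct injectivity of the $k=0$ mode at the endpoint $\mu=3-N$, so the lower bound $\mu>3-N$ is sharp and this is precisely where the dimensional assumption $N\ge 4$ enters (for $N=3$ no such weight range exists). A secondary technical point is obtaining uniform-in-$k$ Bessel estimates strong enough to sum the Fourier modes in the weighted H\"older norm rather than only mode by mode.
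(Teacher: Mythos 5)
Your step one (smoothness of $(T,\varphi)\mapsto L_{T,\varphi}$ via the decay of $\zeta_1,\zeta_{11},\zeta_2,\zeta_{22}$) and step three (openness of $\Iso(X,Y)$ in $\cL(X,Y)$, then taking a union of neighborhoods) coincide with what the paper does. The divergence is in step two, the inversion of $L^D_{T,0}$, and there you have a genuine gap.

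The paper inverts $L^D_{2\pi,0}=\Delta$ (and then rescales $\tau$) by writing $w=w_1-w_2$, where $w_1=\Phi*\tilde f$ is the Newtonian potential in the full $\R^N$ of a periodic extension $\tilde f$, and $w_2$ is the bounded harmonic function with $w_2=w_1$ on $\partial\Omega_1$. The decay $|y|^\mu$ of $w_1$ is read off from the convolution integral and the decay $|y|^{3-N}\le|y|^\mu$ of $w_2$ from a barrier comparison; both are then upgraded to full weighted $C^{2,\alpha}$ bounds by the scale-invariant interior and boundary Schauder estimates on annuli $A_s$. This is a global argument that never separates $\tau$-frequencies.

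You instead decompose $w=\sum_k w_k(y)\cos(k\tau)$ and solve the resulting ODE boundary value problems in $\B_1^c\subset\R^{N-1}$ (Laplacian for $k=0$, modified Helmholtz via $K_{(N-3)/2}$ for $k\ge 1$). The identification of the admissible weight window $(3-N,0)$ from the $k=0$ mode is fine and matches the paper's hypothesis. However, what you dismiss as ``a secondary technical point'' — summing the modes in the weighted H\"older norm — is precisely where this route breaks. Even with uniform-in-$k$ Schauder bounds $\|w_k\|_{2,\alpha,\mu}\le C\|f_k\|_{0,\alpha,\mu-2}$, you cannot sum: a generic $f\in C^{0,\alpha}_{p,e}$ has Fourier coefficients decaying only like $O(k^{-\alpha})$, its Fourier series need not converge absolutely, and H\"older spaces are not characterized by Fourier coefficient decay, so there is no routine way to pass from mode-by-mode bounds to a bound on $\|\sum_k w_k\cos(k\cdot)\|_{2,\alpha,\mu}$. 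This is exactly why, in a different part of the paper (Proposition~\ref{prop:All-HYPcRANDAL}), after solving a one-dimensional problem by Fourier modes the authors only obtain $w\in H^2_{p,e}$ directly and then must run a separate bootstrap ($W^{2,p}_{loc}$ iteration plus Grisvard's theorem) to recover $C^{2,\alpha}$. A Fourier decomposition is well adapted to $L^2$-based Sobolev scales, not to the $\mathcal{C}^{k,\alpha}_\mu$ scale in which Proposition~\ref{TheoinvertLaplace} must be proved. To make your approach rigorous you would need either to switch to a weighted Sobolev framework and then bootstrap, or to supply a Besov/Littlewood-Paley type argument for summability — neither of which is provided. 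As it stands, step two does not prove surjectivity of $L^D_{T,0}$ in the stated spaces, and the convolution-plus-harmonic-correction argument of the paper is the more economical route.
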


The main ingredient in the proof of Theorem~\ref{TheoinvertLaplace} is the following proposition.
\begin{Proposition}\label{TheoinvertLaplace}
Let $3-N <\mu < 0$ and $T>0$. Then the operator
$$
L_{T, 0}^D = \Delta_y + \Bigl(\frac{2\pi}{T}\Bigr)^2 \frac{\partial^2}{\partial \tau^2} : \ \mathcal{C}^{2,\alpha}_{\mu, \mathcal{D}, p, e}( \O_1)  \rightarrow
\mathcal{C}^{0,\alpha}_{\mu-2, p, e}( \O_1)
$$
is a topological isomorphism.
\end{Proposition}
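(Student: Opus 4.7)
The plan is to prove the isomorphism property by Fourier decomposition in the $\tau$-variable, reducing $L_{T,0}^D w = f$ to an $n$-indexed family of exterior elliptic problems on $\{|y|>1\}\subset\R^{N-1}$, and then applying weighted elliptic theory mode-by-mode.

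\textbf{Injectivity.} Suppose $w \in \mathcal{C}^{2,\alpha}_{\mu,\mathcal{D},p,e}(\O_1)$ satisfies $L_{T,0}^D w = 0$. The change of variable $t = (T/2\pi)\tau$ identifies $L_{T,0}^D$ with the standard Laplacian on $\O_1 \subset \R^N$ and turns $2\pi$-periodicity in $\tau$ into $T$-periodicity in $t$. Since $\mu<0$, the pointwise bound in Remark~\ref{quick-est} forces $w(y,\tau) \to 0$ as $|y|\to\infty$ uniformly in $\tau$, so $w$ descends to a bounded harmonic function on the quotient cylinder $\B_1^c\times(\R/T\Z)$ that vanishes on the inner cylinder and at the outer end. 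The maximum principle then gives $w\equiv 0$.

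\textbf{Surjectivity.} Given $f \in \mathcal{C}^{0,\alpha}_{\mu-2,p,e}(\O_1)$, I expand
$$ f(y,\tau) = \sum_{n\ge 0} f_n(y)\cos(n\tau), \qquad w(y,\tau) = \sum_{n\ge 0} w_n(y)\cos(n\tau), $$
so that $L_{T,0}^D w = f$ decouples into
$$ \Delta_y w_n - \lambda_n^2 w_n = f_n \text{ in } \{|y|>1\}, \quad w_n|_{|y|=1} = 0, \quad w_n(y)\to 0 \text{ at infinity}, $$
with $\lambda_n := 2\pi n/T$. For $n=0$ this is the exterior Laplace problem in $\R^{N-1}$; since $N-1\ge 3$, its Green kernel decays like $|y|^{-(N-3)}$, and the weighted H\"older framework of Pacard--Rivi\`ere~\cite{PR} (applied via Kelvin transform) supplies an isomorphism precisely in the range $\mu\in(3-N,0)$, the open interval strictly between the indicial exponents $-(N-3)$ and $0$ of the exterior Laplacian. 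This yields a unique $w_0$ with $\|w_0\|_{2,\alpha,\mu}\le C\|f_0\|_{0,\alpha,\mu-2}$. For $n\ge 1$ the operator $\Delta_y-\lambda_n^2$ is coercive, with Green kernel built from the modified Bessel functions $K_{(N-3)/2}(\lambda_n|y|)$ (properties collected in the appendix) which decay exponentially at infinity; combining Schauder estimates with an exponential barrier gives a corresponding weighted bound on $w_n$.

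\textbf{Reassembly and main obstacle.} The partial sums $W_M = \sum_{n=0}^M w_n(y)\cos(n\tau)$ converge in $\mathcal{C}^{2,\alpha}_{\mu,\mathcal{D},p,e}(\O_1)$: on each annular slab $A_s\times[-\pi,\pi]$, orthogonality of $\{\cos(n\tau)\}$ together with the rapid decay of the Fourier coefficients $f_n$ (inherited from the $C^{0,\alpha}$-regularity of $f$ in $\tau$) and the exponentially small kernels for $n\ge 1$ force $\|w - W_M\|_{2,\alpha,\mu}\to 0$. Continuity of $(L_{T,0}^D)^{-1}$ then follows from the open mapping theorem. The most delicate point is the uniform-in-$n$ weighted estimate in the surjectivity step, in particular at the transition between $n=0$ (genuine polynomial decay) and the low modes $n\ge 1$ with small exponential rate $\lambda_n$. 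Here the hypothesis $N\ge 4$ enters essentially: it ensures $N-1\ge 3$, so the $n=0$ problem falls inside the Pacard--Rivi\`ere framework for genuinely higher-dimensional exterior domains, with fundamental solution decaying as a negative power. For $N=3$ the analogous $n=0$ problem involves the logarithmic fundamental solution in $\R^2$, which is precisely why the present strategy fails in that dimension and \cite{Fall-MinlendI-Weth3} had to employ a different integral-equation approach.
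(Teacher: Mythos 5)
Your injectivity argument matches the paper's (Lemma~\ref{injectivity}): maximum principle on the quotient cylinder after noting the decay forced by the weight. For surjectivity, however, you take a genuinely different route. The paper constructs the solution directly as $w = w_1 - w_2$ with $w_1 = \Phi * \tilde f$ (convolution of a periodic even extension of $f$ with the Newtonian kernel in $\R^N$) and $w_2$ the harmonic function matching the boundary trace of $w_1$; the required weighted estimates are obtained by direct kernel computation (Lemma~\ref{decay-est}) and by comparison with the barrier $|y|^{3-N}$ plus scale-invariant Schauder estimates (Lemma~\ref{harmonic-estimate}). Your proposal instead Fourier-decomposes in $\tau$ and treats the modes $n=0$ and $n\geq 1$ separately. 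Both approaches use the same underlying dichotomy: the zero mode is governed by the exterior Laplacian in $\R^{N-1}$ and is where $\mu\in(3-N,0)$ and $N\geq 4$ are used, while the periodic direction contributes decay.

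The Fourier approach as stated has a genuine gap in the reassembly step. You invoke ``rapid decay of the Fourier coefficients $f_n$ inherited from the $C^{0,\alpha}$-regularity of $f$ in $\tau$'' together with orthogonality to conclude convergence of the partial sums $W_M$ in $\mathcal{C}^{2,\alpha}_\mu$. But $C^{0,\alpha}$ regularity in $\tau$ only gives $\|f_n\|_{L^\infty}=O(n^{-\alpha})$, which is not summable, and orthogonality controls $L^2_\tau$ norms, not $L^\infty$ or H\"older seminorms in $\tau$; in general partial Fourier sums of a merely $C^{0,\alpha}$ function do not even converge uniformly. Combining modewise Schauder bounds $\|w_n\|_{2,\alpha,\mu}\lesssim \|f_n\|_{0,\alpha,\mu-2}$ (which at best are uniform in $n$, not gaining powers of $n$) with the slowly decaying $\|f_n\|$ therefore does not close the argument; you would need either a quantitative gain in $n$ from the Helmholtz operators in the weighted H\"older norm (which ordinary Schauder does not supply, since the constant does not improve as $\lambda_n\to\infty$) or a summation technique adapted to H\"older spaces such as Fej\'er/Abel means with additional commutator estimates. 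This is precisely the difficulty that the paper's direct potential-theoretic construction sidesteps: working with the $N$-dimensional fundamental solution at once, one never has to resum a Fourier series in a H\"older topology. To make your approach rigorous, you would need to supply a concrete uniform-in-$n$ estimate with a summable gain in $n$ and prove convergence of the series in $\mathcal{C}^{2,\alpha}_\mu$, not just in $L^2$.
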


Let us postpone the proof of Proposition~\ref{TheoinvertLaplace} for a moment and first quickly finish the proof of Theorem~\ref{Oper-inv}. Since
$$
\zeta_1(a,b) = O(a^{-2}),\quad \zeta_{11}(a,b) = O(a^{-3}), \quad \zeta_2(a,b) = O(a^{-1})\quad \text{and}\quad \zeta_{11}(a,b) = O(a^{-2})
$$
as $a \to +  \infty$, it follows by a straightforward computation from Lemma  \ref{eq:pulbal} that (\ref{eq:smooth-map}) defines a smooth map. Moreover, since, for any Banach spaces $X,Y$, the set $\Iso(X,Y)$ is open in $\cL(X,Y)$, it follows directly from Proposition~\ref{TheoinvertLaplace} and the continuity of the map $(T,\varphi) \to L_{T,\varphi}$ that there exists an open neighborhood $\cO \subset  (0, +\infty )\times  \mathcal{U}$ of $(0,\infty) \times \{0\}$ with the property that (\ref{eq:Iso-property}) holds.

So the proof of Theorem~\ref{Oper-inv} will be completed by proving Proposition~\ref{TheoinvertLaplace}, and this will be done in the remainder of this section. Without loss of generality, we may restrict our attention to the special case $T= 2\pi$,
in which we have
$$
L_{2\pi, 0} = \Delta,
$$
where $\Delta$ denotes the Laplace operator in the variables $(y,\tau) \in \R^N$. The general case will then follow by rescaling the $\tau$-variable. This will change the period length in the spaces $\mathcal{C}^{2,\alpha}_{\mu, \mathcal{D}, p, e}( \O_1)$ and $\mathcal{C}^{0,\alpha}_{\mu-2, p, e}( \O_1)$ but does not require further changes as the arguments below do not depend on the period length.

We first note the following.

\begin{Lemma}\label{injectivity}
Let $\mu < 0$. Then the operator
$$
\Delta : \ \mathcal{C}^{2,\alpha}_{\mu, \mathcal{D}, p, e}( \O_1)  \rightarrow
\mathcal{C}^{0,\alpha}_{\mu-2, p, e}( \O_1)
$$
is injective.
\end{Lemma}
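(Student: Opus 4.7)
The strategy is a standard maximum-principle argument exploiting the decay built into the weighted H\"older norm together with the $2\pi$-periodicity.

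Let $u \in \mathcal{C}^{2,\alpha}_{\mu, \mathcal{D}, p, e}(\O_1)$ satisfy $\Delta u = 0$ in $\O_1$. By hypothesis $u$ vanishes on $\partial \O_1 = \{|y|=1\}\times \R$, is $2\pi$-periodic (and even) in $\tau$, and, thanks to Remark \ref{quick-est} and the assumption $\mu<0$, enjoys the decay
\[
|u(y,\tau)| \le |y|^{\mu}\, \|u\|_{2,\alpha,\mu} \to 0 \quad\text{as } |y|\to\infty,
\]
uniformly in $\tau\in\R$.

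The plan is to fix $\eps>0$ arbitrarily and choose $R>1$ so large that $|u(y,\tau)|<\eps$ whenever $|y|\ge R$. Because $u$ is $2\pi$-periodic in $\tau$, it is natural to view $u$ as a harmonic function on the compact manifold-with-boundary
\[
K_R := \{\,(y,\tau) : 1\le |y|\le R,\ \tau \in \R/2\pi\Z\,\},
\]
whose boundary consists only of the two pieces $\{|y|=1\}$ (where $u=0$) and $\{|y|=R\}$ (where $|u|<\eps$); there is no boundary contribution from the $\tau$-direction since we have identified $\tau$ modulo $2\pi$. The classical maximum principle applied to $u$ and $-u$ on $K_R$ then yields $|u|\le \eps$ throughout $K_R$, and combined with $|u|<\eps$ on the outer region $\{|y|\ge R\}$, we conclude $|u|\le \eps$ on all of $\O_1$. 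Since $\eps>0$ was arbitrary, $u\equiv 0$.

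There is essentially no obstacle here, as all the analytic work has already been packaged into the definition of the weighted space and its decay property (\ref{eq:asymptotic-decay}); the only point requiring a line of justification is that periodicity in $\tau$ legitimately lets us treat $u$ on a bounded cylinder so that the usual maximum principle applies. The more substantial surjectivity half of Proposition \ref{TheoinvertLaplace}, which is needed to complete Theorem \ref{Oper-inv}, will require a quite different argument (separation of variables and modified Bessel functions, as hinted at in the appendix), but injectivity is handled by this single maximum-principle step.
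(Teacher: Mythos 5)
Your proof is correct and is essentially the paper's own argument: both rely on periodicity in $\tau$ together with the decay $|u|\le |y|^\mu\|u\|_{2,\alpha,\mu}\to 0$ (Remark~\ref{quick-est}) to reduce to the maximum principle with vanishing boundary data. The paper phrases it by noting $u$ attains its max and min on $\overline\O_1$ and neither can be interior, while you use an $\eps$-truncated compact cylinder $K_R$, but this is only a cosmetic difference.
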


\begin{proof}
  Let $w \in \mathcal{C}^{2,\alpha}_{\mu, \mathcal{D}, p, e}( \O_1)$. Then $w = 0$ on $\partial \O_1$ and
  $w(y,\t) \to 0$ as $|y| \to \infty$ uniformly in $\t$ by (\ref{eq:asymptotic-decay}). Since $w$ is continuous and also periodic in the $\tau$-variable, $w$ attains its maximum and minimum on $\overline \O_1$. Moreover, if $\Delta w=0$, then neither the maximum nor the mimimum can be attained in $\Omega_1$ unless $w$ is constant. In any case, we therefore conclude that $\Delta w = 0$ implies $w=0$, and thus the the lemma is proved. \QED
\end{proof}

As a consequence of the open mapping theorem, the proof of Proposition~\ref{TheoinvertLaplace} is completed once we have shown that
\begin{equation}
  \label{eq:surjectivity}
\Delta : \ \mathcal{C}^{2,\alpha}_{\mu, \mathcal{D}, p, e}( \O_1)  \rightarrow
\mathcal{C}^{0,\alpha}_{\mu-2, p, e}( \O_1) \quad \text{is surjective for $3-N <\mu < 0$.}
\end{equation}

To prove this, we let $3-N <\mu < 0$ and $f\in  \mathcal{C}^{0,\alpha}_{\mu-2, p, e}( \O_1)$ be fixed in the following. We are looking for a function $w\in \mathcal{C}^{2,\alpha}_{\mu, \mathcal{D}, p, e}( \O_1) $ such  that
$\Delta w=f$. We shall find this function in the form $w= w_1-w_2$, where
$$
w_1:= \Phi * \tilde f : \overline \O_1 \to \R
$$
where $\tilde f \in \mathcal{C}^{0,\alpha}_{\mu-2, p, e}(\R^N)$ is an arbitrary $\tau$-periodic, even and H\"{o}lder continuous extension of $f$ to $\R^N$ and $w_2$ is a $\tau$-periodic and even solution of
$$
\Delta w_2 =0 \quad \text{in $\Omega_1$},\qquad w_2 = w_1 \quad \text{on $\partial \Omega_1$.}
$$
Here
$$
x \mapsto \Phi(x)= c_N |x|^{2-N} \quad \text{is the fundamental solution associated with $-\Delta$ in $\R^N$,}
$$
 where
  $c_N= \frac{1}{(N-2)|S^{N-1}|}$. The surjectivity is therefore a consequence of the following two lemmas.

\begin{Lemma}
\label{decay-est}
Let $3-N < \mu < 0$ and $\tilde f \in \mathcal{C}^{0,\alpha}_{\mu-2, p, e}(\R^N)$ be as above.
Then
$$
\frac{1}{|\cdot|^{N-2}}* \tilde f \in \mathcal{C}^{2,\alpha}_{\mu,p,e}( \R^N).
$$
and therefore
$$
\Bigl(\frac{1}{|\cdot|^{N-2}}* \tilde f\Bigr)\Big|_{\overline \O_1} \in \mathcal{C}^{2,\alpha}_{\mu,p,e}( \O_1).
$$
\end{Lemma}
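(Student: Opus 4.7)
The plan is to reduce the claim to two pieces: a pointwise decay bound $|w_1(x_0)| \le C |y_0|^\mu \|\tilde f\|_{0,\alpha,\mu-2}$ valid on all of $\R^N$, and the scale-invariant interior Schauder estimates collected in the appendix, applied to the equation $\Delta w_1 = \tilde f$ on the dyadic annuli $A_s$. The $2\pi$-periodicity and evenness of $w_1$ in the $\tau$-variable will follow at once from the corresponding properties of $\tilde f$ by the substitutions $\tau' \mapsto \tau' - 2\pi$, respectively $\tau' \mapsto -\tau'$, inside the convolution integral, so the only real work is in the weighted bounds.

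First I would verify that the convolution
\[
w_1(x_0) = c_N \int_{\R^N} |x_0 - x'|^{2-N}\, \tilde f(x')\, dx'
\]
is absolutely convergent at every $x_0 = (y_0,\tau_0)$. This is where $N \ge 4$ enters crucially: although $\tilde f$ is only periodic in $\tau'$, the identity
\[
\int_{\R} \bigl(|y-y'|^2 + s^2\bigr)^{-(N-2)/2}\, ds = C_N\, |y-y'|^{3-N},
\]
valid precisely because $N - 2 > 1$, lets one integrate out $\tau'$ first. Combined with the pointwise bound $|\tilde f(y,\tau)| \le C |y|^{\mu-2}\|\tilde f\|_{0,\alpha,\mu-2}$ from Remark~\ref{quick-est}, the question is reduced to an $(N-1)$-dimensional integral against the weaker kernel $|y-y'|^{3-N}$.

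For the pointwise decay at large $|y_0|$, I would split the $y'$-integration into the three standard regions: the inner region $\{|y'| \le |y_0|/2\}$, the annular region $\{|y_0|/2 \le |y'| \le 2|y_0|\}$, and the outer region $\{|y'| \ge 2|y_0|\}$. On the inner region one has $|y_0-y'| \ge |y_0|/2$ and $\int_{|y'| \le |y_0|/2} |y'|^{\mu-2}\, dy' \sim |y_0|^{\mu+N-3}$, finite exactly because $\mu > 3-N$, giving a contribution of order $|y_0|^\mu$. On the outer region $|y_0-y'| \sim |y'|$ and the remaining integral reduces to $\int_{r \ge 2|y_0|} r^{\mu-1}\, dr \sim |y_0|^\mu$, finite exactly because $\mu < 0$. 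On the middle annulus one uses $|\tilde f| \le C |y_0|^{\mu-2}$ and then $\int_{|y''| \le 3|y_0|} |y''|^{3-N}\, dy'' \sim |y_0|^2$, again producing $|y_0|^\mu$. For $|y_0|$ bounded, local integrability of $|\cdot|^{2-N}$ in $\R^N$ gives $\|w_1\|_{L^\infty}$ bounded on compact sets, and since $\mu < 0$ one has $|y_0|^\mu \ge 1$ for $|y_0| \le 1$, which absorbs the constant.

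Once $\|w_1\|_{L^\infty(A_s)} \le C s^\mu \|\tilde f\|_{0,\alpha,\mu-2}$ is established for all $s > 0$, the full $\mathcal{C}^{2,\alpha}_{\mu}$-norm will be obtained by the standard rescaling trick: the functions $W_s(x) := s^{-\mu} w_1(sx)$ satisfy $\Delta W_s = s^{2-\mu}\tilde f(s\, \cdot)$ on the fixed annulus $A_1$, and the $C^{0,\alpha}$-norm of the right-hand side there is controlled by $\|\tilde f\|_{0,\alpha,\mu-2}$ uniformly in $s$. Interior Schauder estimates on a slight enlargement of $A_1$ then give a uniform $C^{2,\alpha}$-bound on $W_s$, which unscales exactly into the desired bound on $[w_1]_{2,\alpha,s}$. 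The hardest step is the first one: making sense of the convolution despite the non-decay of $\tilde f$ in $\tau$, and sharply estimating the middle region where the singularity of the Newton kernel and the $\tau'$-integration over the full real line interact most delicately. It is precisely here that the hypothesis $N \ge 4$ (in contrast with the borderline case $N=3$ treated in the companion work) produces the integrability that drives the whole argument.
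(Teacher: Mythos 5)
Your proposal is correct and follows essentially the same route as the paper: integrate out the periodic $\tau'$-variable using $N-2>1$, obtain the pointwise decay $|w_1|\lesssim |y|^\mu\|\tilde f\|_{0,\alpha,\mu-2}$ from the reduced $(N-1)$-dimensional integral, and then upgrade to a $\mathcal{C}^{2,\alpha}_\mu$ bound via the scale-invariant interior Schauder estimates on the dyadic annuli $A_s$. The only cosmetic difference is that the paper evaluates the reduced integral in one stroke by rotational invariance and scaling, writing it as $D_{N,\mu}|x|^\mu$ with $D_{N,\mu}=\int_{\R^{N-1}}|z|^{\mu-2}|e_1-z|^{3-N}\,dz<\infty$, whereas you use the standard near/middle/far decomposition of $\R^{N-1}$; both hinge on exactly the hypothesis $3-N<\mu<0$.
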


\begin{proof}
 By \eqref{eq estiA}) we have
  \begin{equation}
    \label{eq:assumption-1}
|f(y,\sigma)| \le |y|^{\mu-2} \|f\|_{0,\alpha,\mu-2} \qquad \text{for $y \in \R^{N-1}$, $\sigma \in \R$.}
\end{equation}
For $x \in \R^{N-1}\setminus \{0\}$ and $t \in \R$ we then find, by a change of variable, that
\begin{align}\label{ene1}
&\Bigl|\Bigl (\frac{1}{|\cdot|^{N-2}}* f\Bigr)(x,t) \Bigr| \le \int_{\R^{N-1}} \int_{\R} \Bigl(|x-y|^2+(t-\sigma)^2\Bigr)^{\frac{2-N}{2}}|f (y, \sigma) | \,d\sigma dy\\
&\le  \|f\|_{0,\alpha,\mu-2} \int_{\R^{N-1}}|y|^{\mu-2} |x-y|^{2-N}  \int_{\R} \Bigl(1+\Bigl(\frac{t-\sigma}{|x-y|}\Bigr)^2\Bigr)^{\frac{2-N}{2}}\,d\sigma dy\nonumber\\
&= C_N  \|f\|_{0,\alpha,\mu-2} \int_{\R^{N-1}}|y|^{\mu-2} |x-y|^{3-N} dy \quad \text{with}\quad C_N = \int_{\R} \Bigl(1+\tau^2\Bigr)^{\frac{2-N}{2}}\,d\tau < \infty.  \nonumber
\end{align}
By rotational invariance, we thus find that
\begin{align}
\Bigl|\Bigl(\frac{1}{|\cdot|^{N-2}}* f\Bigr)(x,t) \Bigr|&\le C_N \|f\|_{0,\alpha,\mu-2} \int_{\R^{N-1}}|y|^{\mu-2} \bigl| |x|e_1 -y\bigr |^{3-N} dy\nonumber\\
  &= C_N \|f\|_{0,\alpha,\mu-2} |x|^{\mu + 1-N} \int_{\R^{N-1}}\bigl|\frac{y}{|x|} \bigr |^{\mu-2} \bigl| e_1 -\frac{y}{|x|}\bigr |^{3-N} dy \nonumber\\
  &= C_N D_{N,\mu} \|f\|_{0,\alpha,\mu-2} |x|^{\mu}, \label{ene2}
\end{align}
where $D_{N,\mu}:=\int_{\R^{N-1}} |z|^{\mu-2} |e_1 -z|^{3-N} dz$ is finite since $3-N < \mu < 0$. Therefore, the function  $u:= \frac{1}{|\cdot|^{N-2}}* f$  satisfies the estimate
\begin{equation}
  \label{eq:Linfu1}
\|u\|_{L^\infty(A_s)}\le C_{N, \mu} \|f\|_{0,\alpha,\mu-2} s^{\mu}
\end{equation}
for some constant $C_{N, \mu}>0.$

Moreover, by  \eqref{eq:R-dependent} in the appendix, using the fact that $u$ solves $-\Delta u= f$ in $A_{2s} \cup A_{s}\cup A_{\frac{s}{2}} $, we find that for every $s > 0$,
\begin{align*}
&  \sum_{i = 0}^2 s^i \|\nabla^i  {u} \|_{A_s} +  s^{2+\alpha}[\nabla^2  {u} ]_{C^{0,\alpha} (A_s)}\\ &\le C \Bigl( s^2 \|f \|_{L^\infty( A_{2s} \cup A_{s} \cup A_{\frac{s}{2} })} + s^{2+\alpha}[f]_{C^{0,\alpha}(A_{2s} \cup A_{s}\cup A_{\frac{s}{2}})}
      + \|u \|_{L^\infty(A_{2s} \cup A_{s}\cup A_{\frac{s}{2}})}\Bigr)\\
    &\le C s^\mu \Bigl( \|f \|_{0,\alpha,\mu-2}
      + \|u\|_{0,\alpha,\mu}\Bigr)\le  C s^\mu \|f \|_{0,\alpha,\mu-2},
\end{align*}
where we used   \eqref{eq:Linfu1} in the last step. Hence $u \in \mathcal{C}^{2,\alpha}_{\mu}( \R^N)$. Moreover, $u$ is even and periodic in the $t$-variable, and therefore $u \in \mathcal{C}^{2,\alpha}_{\mu,p,e}( \R^N)$, as claimed. \QED
\end{proof}

\begin{Lemma}
  \label{harmonic-estimate}
  Let $3-N < \mu < 0$ and let $\phi \in C^{2,\alpha}_{p,e}(\ov \Omega_1)$.
  Then there exists $w \in \mathcal{C}^{2,\alpha}_{\mu,p,e}(\overline{\Omega_1})$ satisfying
  \begin{equation}
    \label{eq:harmonic-estimate-eq1}
  \Delta w= 0 \quad \text{in $\Omega_1$,} \qquad w = \phi \quad \text{on $\partial \Omega_1$.}
  \end{equation}
\end{Lemma}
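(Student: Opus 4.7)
\emph{Strategy.} The plan is to construct $w$ as the limit of solutions to Dirichlet problems on bounded truncations of $\O_1$ (working on the $\tau$-quotient $\R/2\pi\Z$), controlled by an explicit harmonic barrier of the form $|y|^{3-N}$, and then upgrade the resulting $L^\infty$ decay to the weighted $C^{2,\alpha}$ estimate via a rescaling argument. The dimensional input $N \ge 4$ enters precisely through the availability of this decaying barrier.

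\emph{Truncated problems and barrier.} For each $R > 1$, set $\O_1^R := \{(y,\tau) : 1 < |y| < R\}$ and, using $2\pi$-periodicity in $\tau$, solve on $\O_1^R/(\tau \sim \tau + 2\pi)$ the classical Dirichlet problem $\Delta w^R = 0$ with $w^R = \phi$ on $\{|y| = 1\}$ and $w^R = 0$ on $\{|y| = R\}$. Standard Schauder theory provides a unique $C^{2,\alpha}$ solution, which by uniqueness is even in $\tau$. A direct computation gives
\[
\Delta_{(y,\tau)} |y|^s = s(s + N - 3) |y|^{s-2} \qquad \text{on } \R^N \setminus \{y = 0\},
\]
so $|y|^{3-N}$ is harmonic on this set. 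Setting $M := \|\phi\|_{L^\infty(\partial \O_1)}$, the function $B(y) := M |y|^{3-N}$ dominates $\pm\phi$ on $\{|y|=1\}$ and is nonnegative on $\{|y|=R\}$, so the maximum principle gives the uniform bound $|w^R(y,\tau)| \le M |y|^{3-N}$ on $\overline{\O_1^R}$.

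\emph{Passage to the limit and weighted estimate.} The uniform $L^\infty$ bound, together with interior Schauder estimates away from $\partial \O_1$ and boundary Schauder estimates near it (using $\phi \in C^{2,\alpha}$), allows extraction of a subsequence $w^{R_k} \to w$ in $C^{2,\alpha}_{\text{loc}}(\overline{\O_1})$. The limit $w$ is harmonic in $\O_1$, equals $\phi$ on $\partial \O_1$, is $2\pi$-periodic and even in $\tau$, and satisfies $|w(y,\tau)| \le M |y|^{3-N}$. For the weighted estimate, on each annulus $A_s$ with $s > 1$ bounded away from $1$ rescale via $\tilde w(y',\tau') := w(sy', s\tau')$; the rescaled function is harmonic on $\{1 \le |y'| \le 2\}$ (and on a small neighborhood, by applying the same rescaling to a slightly larger annulus where $w$ is harmonic), and interior harmonic-function estimates combined with $\|\tilde w\|_\infty \le C M s^{3-N}$ give $\|\tilde w\|_{C^{2,\alpha}} \le C M s^{3-N}$, which unscales to $[w]_{2,\alpha,s} \le C M s^{3-N}$. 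Since $3 - N \le \mu$ and $s \ge 1$, this yields $s^{-\mu}[w]_{2,\alpha,s} \le C s^{3-N-\mu} \le C$; for $s$ close to $1$ the same conclusion follows from boundary Schauder with constants depending on $\|\phi\|_{C^{2,\alpha}}$. Hence $w \in \mathcal{C}^{2,\alpha}_{\mu, p, e}(\overline{\O_1})$ as required.

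\emph{Main subtlety.} The only real subtlety is the existence of the decaying harmonic barrier $|y|^{3-N}$, which genuinely requires $N \ge 4$: in dimension $N = 3$ the analogue $|y|^0 \equiv 1$ gives no decay (and the sharp barrier would be logarithmic, which grows), matching the dimensional contrast emphasized in the introduction and making the hypothesis $3 - N < \mu$ both natural and sharp for this construction.
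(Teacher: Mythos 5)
Your proof is correct, and its essential analytical content coincides with the paper's: the decaying barrier $|y|^{3-N}$ (available precisely because $N \ge 4$) gives the pointwise decay $|w| \le C|y|^{3-N}$, and scale-invariant Schauder estimates on dyadic annuli convert this into the weighted $C^{2,\alpha}_\mu$ bound, using $3-N < \mu < 0$ in exactly the same way. The one genuine difference is in the existence step: the paper invokes Perron's method directly on the unbounded domain $\Omega_1$ to produce a periodic, even, decaying solution in one stroke, whereas you construct it as the limit of solutions on truncations $\{1 < |y| < R\}$ (quotiented by $\tau\mapsto\tau+2\pi$), obtaining uniform bounds from the same barrier and passing to a $C^{2,\alpha'}_{\mathrm{loc}}$ limit. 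Your route is a bit more self-contained (no appeal to Perron's method as a black box, and the barrier constant $M=\|\phi\|_{L^\infty}$ is slightly sharper than the paper's $\|\phi\|_{C^{2,\alpha}}$), at the cost of the extra exhaustion-and-compactness bookkeeping; the paper's is shorter but relies on the reader accepting Perron's method on the unbounded periodic cylinder complement. Both are fully rigorous, and the rest of the argument is identical in substance. One cosmetic point: the convergence $w^{R_k}\to w$ in $C^{2,\alpha}_{\mathrm{loc}}$ should really be stated in $C^{2,\alpha'}_{\mathrm{loc}}$ for $\alpha'<\alpha$ (with the limit nonetheless in $C^{2,\alpha}_{\mathrm{loc}}$ by the uniform estimates), since $C^{2,\alpha}$ does not embed compactly into itself; this does not affect the conclusion.
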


\begin{proof}
  With the help of Perron's method, it is easy to see that (\ref{eq:harmonic-estimate-eq1}) admits an even and periodic solution $w \in \mathcal{C}^{2,\alpha}_{loc}(\overline{\Omega_1})$ with respect to $\tau$
   satisfying
  $$
  w(y,\tau) \to 0 \qquad \text{as $|y| \to \infty$ uniformly in $\tau \in \R$.}
  $$
  To see that $w \in \mathcal{C}^{2,\alpha}_{\mu,p,e}(\overline{\Omega_1})$, we first note that
  \begin{equation}
    \label{eq:harmonic-estimate-eq2}
    |w(y,\tau)| \le \|\phi\|_{C^{2,\alpha}_{p,e}(\ov \Omega_1)} |y|^{3-N}\qquad \text{for $(y,\tau) \in \Omega_1$,}
  \end{equation}
  by comparison (see \cite[Theorem 3.3]{GilbardTru}) with the functions
  $$
  w_{\pm}: \overline{\Omega_1} \to \R, \qquad  w_{\pm}(y,\tau) = \pm \|\phi\|_{C^{2,\alpha}_{p,e}(\ov \Omega_1)}|y|^{3-N},
  $$
  which are harmonic in $\Omega_1$ and satisfy $ w_{-} \leq w \le w_{+} $ on $\partial \Omega_1$.

Moreover, for $s>  2$, there exists $R_0>1$ such that   $ B_{R_0 s}(x)\subset A_{2s} \cup A_{s}\cup A_{\frac{s}{2}}$ for any $x\in A_s$. Using this and  the fact that ${w}$ solves $-\Delta {w}= 0$ in $A_{2s} \cup A_{s}\cup A_{\frac{s}{2}}$, we may apply \eqref{eq:R-dependen2} in the appendix with $f\equiv 0$ to see that
\begin{align}\label{estimmm2}
\sum_{i = 0}^2 s^i \|\nabla^i  {w} \|_{A_s} +   s^{2+\alpha}[\nabla^2  {w} ]_{C^{0,\alpha} (A_s)}
 \le C  \|{w} \|_{L^\infty(A_{2s} \cup A_{s}\cup A_{\frac{s}{2}})} &\le C \|\phi\|_{L^\infty( \Omega_1)} s^{3-N} \nonumber\\
 &\le C \|\phi\|_{L^\infty( \Omega_1)} s^{\mu}
\end{align}
Here we used that $\mu > 3-N$.  Using also the fact that, by standard elliptic estimates we have
$w \in \mathcal{C}^{2,\alpha}(\overline{\Omega_1 \setminus \Omega_2})$
with $\|w\|_{\mathcal{C}^{2,\alpha}(\overline{\Omega_1 \setminus \Omega_2})} \le C \|\phi\|_{C^{2,\alpha}(\ov \Omega_1)}$, we see that  for $1<s\leq2$,
\begin{align} \label{estimmm3}
\sum_{i = 0}^2 s^i \|\nabla^i  {w} \|_{A_s} +   s^{2+\alpha}[\nabla^2  {w} ]_{C^{0,\alpha} (A_s)}
 &\le C \|w\|_{\mathcal{C}^{2,\alpha}(\overline{\Omega_1 \setminus \Omega_2})}   \nonumber\\
  &\le  C \|\phi\|_{C^{2,\alpha}(\ov \Omega_1)}  \le \frac{C}{2^\mu}  s^\mu \|\phi\|_{C^{2,\alpha}(\ov \Omega_1)}
\end{align}
where $C$ is a constant only depending on $\alpha$.

Combining \eqref{estimmm2} and \eqref{estimmm3}, we obtain
\begin{equation}
  \label{eq:mu-mu-2-est-4new}
\|w\|_{2,\alpha,\mu} \le C \|\phi\|_{C^{2,\alpha}(\ov \Omega_1)},
\end{equation}
as required. \QED
\end{proof}
\section{Reformulation of the problem as a nonlinear operator equation}
\label{sec:soln_construction}

In this section, we wish to use Theorem~\ref{Oper-inv} to formulate problem (\ref{eq:ovder2rephrased}),~(\ref{eq:overcondi22}) as a nonlinear operator equation. For this, we need the following Lemma. Throughout the remainder of the paper, we let $\cO$ be given as in Theorem~\ref{Oper-inv}.

\begin{Lemma}\label{lem:Oper-inv}
  Let $3-N < \mu < 0$. Then there exists a smooth map
\begin{equation}\label{eq:smoothsol}
\cO \to \mathcal{C}^{2,\alpha}_{\mu, p, e}(\O_1), \quad (T, \varphi)\mapsto  w_{T, \varphi}
\end{equation}
with the property that, for every $(T,\varphi) \in \cO$, the function $w_{T,\varphi}$ is the unique solution of the problem
\begin{align}\label{eq:SoluDirichlet}
\begin{cases}
L_{T, \varphi} w_{T, \varphi}  = 0& \quad  \textrm { in } \quad   \B^c_{1} \times \R  \vspace{3mm} \\
 w_{T, \varphi} =  1& \quad\textrm{ on } \quad\partial (\B^c_{1} \times \R )\vspace{3mm}\\
 \lim \limits_{|y| \rightarrow \infty}  w_{T, \varphi}  (y, \tau ) = 0& \quad \textrm{uniformly in $\tau \in \R$.}
\end{cases}
 \end{align}
Moreover, the functions $w_{T, \varphi}: \overline \Omega_1 \to \R$ and $\partial_{\eta_\phi} w_{T, \varphi}: \partial \O_1 \to \R$
are radially symmetric in the $y$-variable, and
\begin{equation}
\label{eq:uat1}
w_{T,0}(y,\tau)=|y|^{3-N}= u_1(y,\tau) \qquad \text{for every $T>0$.}
\end{equation}
\end{Lemma}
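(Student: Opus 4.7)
The plan is to reduce the inhomogeneous Dirichlet problem \eqref{eq:SoluDirichlet} to the homogeneous invertibility statement in Theorem~\ref{Oper-inv} by producing an explicit lift of the boundary data. The obvious candidate is $u_1(y,\tau):=|y|^{3-N}$, which is $\tau$-independent, satisfies $\Delta_y|y|^{3-N}=0$ on $\R^{N-1}\setminus\{0\}$, equals $1$ on $\partial\O_1$, and decays as $|y|\to\infty$. I would first verify that $u_1 \in \mathcal{C}^{2,\alpha}_{\mu,p,e}(\O_1)$ for every $\mu \ge 3-N$: since $u_1$ and its derivatives up to order two decay like $|y|^{3-N}$ on each annulus $A_s$, the seminorms $[u_1]_{2,\alpha,s}$ are of order $s^{3-N}$, and the weighted supremum of Definition~\ref{main-definition-hoelder-etc} is finite exactly when $\mu\ge 3-N$. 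In particular, this is the case for our fixed $\mu \in (3-N,0)$.

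Writing $w_{T,\varphi}=u_1+v_{T,\varphi}$, the problem becomes
$$
L_{T,\varphi} v_{T,\varphi} \,=\, -L_{T,\varphi}u_1 \quad\text{in }\O_1,\qquad v_{T,\varphi}=0\quad\text{on }\partial\O_1,\qquad v_{T,\varphi}(y,\tau)\to 0\ \text{as }|y|\to\infty.
$$
By Theorem~\ref{Oper-inv}, $L_{T,\varphi}$ maps $\mathcal{C}^{2,\alpha}_{\mu,p,e}(\O_1)$ continuously into $\mathcal{C}^{0,\alpha}_{\mu-2,p,e}(\O_1)$ and depends smoothly on $(T,\varphi)$, so $L_{T,\varphi}u_1$ lies in $\mathcal{C}^{0,\alpha}_{\mu-2,p,e}(\O_1)$ and depends smoothly on $(T,\varphi)$. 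On the neighborhood $\cO$ provided by the same theorem, $L_{T,\varphi}^D$ is a topological isomorphism, so I can set
$$
v_{T,\varphi} \,:=\, -\bigl(L_{T,\varphi}^D\bigr)^{-1} L_{T,\varphi}u_1 \ \in\ \mathcal{C}^{2,\alpha}_{\mu,\mathcal{D},p,e}(\O_1), \qquad w_{T,\varphi} \,:=\, u_1+v_{T,\varphi}.
$$
The smoothness of $(T,\varphi)\mapsto w_{T,\varphi}$ then follows from the smoothness of $(T,\varphi)\mapsto L_{T,\varphi}$ combined with the smoothness of inversion on $\Iso$.

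For uniqueness, any two solutions in $\mathcal{C}^{2,\alpha}_{\mu,p,e}(\O_1)$ share the same boundary trace, so their difference lies in $\mathcal{C}^{2,\alpha}_{\mu,\mathcal{D},p,e}(\O_1)$, is annihilated by $L_{T,\varphi}^D$, and therefore vanishes. Radial symmetry of $w_{T,\varphi}$ in $y$ then follows from this uniqueness together with the rotational invariance of $L_{T,\varphi}$: inspecting \eqref{eqropepulbackkk} one sees that its coefficients depend on $y$ only through $|y|$, while the differential operators appearing---$\Delta_y$, $y\cdot\nabla_y$, $\sum_{k,\ell} y_k y_\ell \partial_k\partial_\ell$, and $(y\cdot\nabla_y)\partial_\tau$---are all rotationally invariant in $y$. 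Since the lift $u_1$ and the boundary datum $1$ are also radial in $y$, uniqueness forces $w_{T,\varphi}$ to be radial. The corresponding symmetry of $\partial_{\eta_{T,\varphi}} w_{T,\varphi}$ on $\partial\O_1$ then follows from the representation \eqref{norde11bac}, since at boundary points the outer normal $\Upsilon$ depends on $\sigma=y/|y|$ only through an overall factor that respects radial symmetry in $y$.

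Finally, when $\varphi\equiv 0$ we have $\kappa\equiv 1$, hence $\zeta\equiv 1$ and $\zeta_1=\zeta_{11}=\zeta_2=\zeta_{22}\equiv 0$; formula \eqref{eqropepulbackkk} then collapses to $L_{T,0}=\Delta_y+(2\pi/T)^2\partial_\tau^2$, which annihilates the $\tau$-independent harmonic function $u_1$. By uniqueness $v_{T,0}=0$ and $w_{T,0}=u_1$, confirming \eqref{eq:uat1}. I expect the only real subtlety in this argument to be the decay bookkeeping for $u_1$: the admissible range $3-N<\mu<0$ is exactly what makes $u_1$ an admissible lift while keeping the problem inside the Fredholm framework of Theorem~\ref{Oper-inv}.
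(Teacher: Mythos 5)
Your argument is correct and coincides essentially verbatim with the paper's own proof: both subtract the explicit lift $u_1=|y|^{3-N}$ to reduce to the Dirichlet-zero setting, apply $(L^D_{T,\varphi})^{-1}$ to $L_{T,\varphi}u_1$ on the neighborhood $\cO$ from Theorem~\ref{Oper-inv}, and derive smoothness, uniqueness, radial symmetry, and the identity $w_{T,0}=u_1$ in the same way. The only cosmetic difference is a sign convention ($w=u_1+v$ versus $w=-m+u_1$), and your extra observation that $u_1$ is admissible for all $\mu\ge 3-N$ is a harmless sharpening.
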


\proof
We first note that $u_1\in  \mathcal{C}^{2,\alpha}_{\mu}(\O_1)$  since  $3-N < \mu < 0$, as $|\partial^i u_1(y,\t)|\le (N-3)|y|^{2-N}$ and  $|\partial^{ij} u_1(y,\t)| \le N(N-3)|y|^{1-N}$ for $i,j = 1,\dots,N$. We consider the map
\begin{equation}
  \label{eq:def-u-t-phi}
\cO \to \mathcal{C}^{2,\alpha}_{\mu, \mathcal{D}, p, e}( \O_1), \qquad (T,\varphi) \mapsto m_{T, \varphi}:= \Bigl(L_{T,\varphi}^D\Bigr)^{-1}\Bigl(L_{T,\varphi} u_1\Bigr),
\end{equation}
which is well-defined by Theorem~\ref{Oper-inv}. For $(T,\varphi) \in \cO$, the function $m_{T, \varphi} \in \mathcal{C}^{2,\alpha}_{\mu, \mathcal{D}, p, e}( \O_1)$ is the unique solution of the problem
\begin{align}\label{eq:SoluDirichlet2}
\begin{cases}
L_{T, \varphi} m_{T, \varphi}  =  L_{T, \varphi} u_1& \quad  \textrm { in } \quad   \O_1  \vspace{3mm} \\
m_{T, \varphi} =  0& \quad\textrm{ on } \quad\partial  \O_1\\
\lim \limits_{|y| \rightarrow \infty}  m_{T, \varphi}  (y, \tau ) = 0& \quad \textrm{uniformly in $\tau \in \R$.}
\end{cases}
 \end{align}
Hence the function
\begin{equation}\label{eq:SoluDifinial}
w_{T, \varphi}: =-m_{T, \varphi}+ u_1 \; \in \; \mathcal{C}^{2,\alpha}_{\mu, p, e}(\O_1)
 \end{equation}
 is the unique solution of \eqref{eq:SoluDirichlet}. Moreover, both $w_{T, \varphi}: \overline \Omega_1 \to \R$ and $\partial_{\eta_\phi} w_{T, \varphi}: \partial \O_1 \to \R$ are radially symmetric in the $x$-variable by uniqueness and the fact that both $\Omega_1$ and the operator $L_{T,\varphi}$ are radial in the $x$-variable. In addition, for $T>0$ we have  $m_{T, 0} \equiv 0$ in $\O_1$ since $L_{T,0} u_1 = \Delta_y u_1 \equiv 0$ and therefore $w_{T,0}=u_1$.

 It thus remains to show that the map $(T,\varphi) \mapsto m_{T,\varphi}$ in (\ref{eq:def-u-t-phi}) is smooth. For this we first note that, for every pair of Banach spaces $X,Y$, the inversion map
 $$
 \mathcal{I}(X, Y) \to \mathcal{I}(Y, X),  \qquad T \mapsto T^{-1}
 $$
 is smooth in the open set $\mathcal{I}(X, Y) \subset \mathcal{L}(X, Y)$ of topological isomorphisms.
 Hence the smoothness of the maps $(T,\varphi) \mapsto m_{T,\varphi}$ follows by the smoothness of the maps
 \begin{align*}
   \cO \to  \mathcal{L}( \mathcal{C}^{2,\alpha}_{\mu, p, e }(\O_1), \mathcal{C}^{0,\alpha}_{\mu-2, p, e }(\O_1)),  \qquad (T,\varphi) \to L_{T, \varphi},\\
   \cO \to  \mathcal{L}(\mathcal{C}^{2,\alpha}_{\mu, \mathcal{D}, p, e}( \O_1), \mathcal{C}^{0,\alpha}_{\mu-2, p, e }(\O_1)),   \qquad (T,\varphi) \to L_{T, \varphi}^D
 \end{align*}
asserted in Theorem~\ref{Oper-inv}. The proof is thus finished.\QED

The aim now is to prove that for some  parameter values $(T,\varphi) \in \cO$ with $\varphi \not \equiv 0$, the function $w_{,T\varphi}$  satisfies the overdetermined condition
\begin{align}\label{eq:overcondi2}
\dfrac{\partial  w_{T,\varphi} } {\partial \eta_{T,\varphi}}= N-3 &\quad
\textrm { on } \quad  \partial (\B^c_{1} \times \R ).
 \end{align}
We thus define the map
\begin{equation}\label{CR-map}
  F: \cO  \subset \R \times    C^{2,\alpha}_{p,e}(\R)   \to   C^{1,\alpha}_{p,e}(\R), \quad
  F (T,\varphi)(\tau): = \dfrac{\partial  w_{T,\varphi} } {\partial \eta_{T,\varphi}}(e_1,\tau)-(N-3)
\end{equation}
By radial symmetry of $\dfrac{\partial  w_{T,\varphi} } {\partial \eta_{T,\varphi}}$, the condition~(\ref{eq:overcondi2}) is therefore equivalent to $F(T,\varphi)=0$. Our aim is to apply the Crandall-Rabinowitz bifurcation theorem\cite[Theorem 1.7]{M.CR} to  solve the equation
\begin{equation}\label{bifurcaequa}
F(T,\varphi)\equiv0  \quad\textrm{in}\quad C^{1, \alpha}_{p,e}(\R).
\end{equation}
Observe that  from \eqref{eq:outer2} and \eqref{eq:rel-mu-phi-nu-phi} that the map $\cO \to  C^{1, \alpha}(\partial \O_1, \R^N)$, $(T,\varphi)\mapsto \eta_{T,\varphi} $ is  smooth. This together with the smoothness of the map in   \eqref{eq:smoothsol} guarantees   that $(T,\varphi)\mapsto F(T,\varphi) $   is smooth. Indeed, we have the following observation.

\begin{Lemma} (see \cite[Lemma 2.3]{Fall-MinlendI-Weth})\\
Let
$$
h: \cO \to C^{2,\alpha}(\overline \O_1), \qquad (T,\phi) \mapsto h_{T,\phi}
$$
be a smooth map. Then the map
$$
\cG(T,\phi): \cO \to C^{1,\alpha}(\partial \O_1), \qquad \cG(\phi)=
 \frac{\partial h_{T,\phi}}{\partial \eta_{T,\phi}}
$$
is smooth as well and satisfies
\begin{align}\label{eq:cainenormader}
D_\phi \cG(T,\phi)v  = \frac{\partial h_{T,\phi}}{\partial \tilde \eta_{T,\phi} (v)}
+  \frac{\partial \Bigl([D_{\phi} h_{T,\phi}] v \Bigr)}{\partial \eta_{T,\phi}}  \qquad
\text{for $v\in C^{2,\alpha}_{p,e}(\R)$,}
 \end{align}
 where
 $$
\tilde \eta_{T,\phi}(v):= [D_\phi \eta_{T,\phi}]v \in
C^{1,\alpha}_{p,e}(\partial \O_1,\R^{N}) \qquad \text{for $(T,\phi) \in
\cO,\: v \in C^{2,\alpha}_{p,e}(\R)$.}
$$
\end{Lemma}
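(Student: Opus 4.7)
The plan is to write $\cG(T,\phi)$ as a pointwise bilinear pairing
$$
\cG(T,\phi) \;=\; \bigl\langle \nabla h_{T,\phi}\big|_{\partial\O_1},\; \eta_{T,\phi}\bigr\rangle
$$
and then deduce smoothness and the derivative formula from the chain and product rules in Banach spaces. Since the statement is local in $(T,\phi)$, I would work in a neighbourhood of a fixed pair $(T_0,\phi_0)\in\cO$ in which all quantities live in well-defined H\"older spaces on the \emph{fixed} boundary $\partial\O_1$; the $\phi$-dependence of the geometry of $\partial\O_{T,\phi}$ has already been absorbed through the pull-back $\Psi_{T,\phi}$ from Section \ref{pullback}.

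First I would verify the smoothness of each of the two factors separately. The map
$$
\cO \;\to\; C^{1,\alpha}(\partial\O_1,\R^N), \qquad (T,\phi)\;\mapsto\;\nabla h_{T,\phi}\big|_{\partial\O_1}
$$
is smooth because it is the composition of the assumed smooth map $(T,\phi)\mapsto h_{T,\phi}\in C^{2,\alpha}(\overline{\O_1})$ with the bounded linear trace-gradient operator $C^{2,\alpha}(\overline{\O_1})\to C^{1,\alpha}(\partial\O_1,\R^N)$. The map $(T,\phi)\mapsto \eta_{T,\phi}\in C^{1,\alpha}(\partial\O_1,\R^N)$ is smooth because of the explicit representations (\ref{eq:outer2}) and (\ref{eq:rel-mu-phi-nu-phi}): $\Upsilon\circ\Psi_{T,\phi}$ depends smoothly on $\phi\in C^{2,\alpha}_{p,e}(\R)$ (it involves only $\phi$ and $\phi'$ evaluated on $\partial\O_1$) and $[d\Psi_{T,\phi}]^{-1}$ depends smoothly on $(T,\phi)$ through (\ref{eq:param}), since the entries of the Jacobian involve $\phi$ and $\phi'$ and the inversion map is smooth on invertible matrices.

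Once these two factors are smooth, the smoothness of $\cG$ follows from the fact that the pointwise inner product
$$
C^{1,\alpha}(\partial\O_1,\R^N)\times C^{1,\alpha}(\partial\O_1,\R^N)\;\to\; C^{1,\alpha}(\partial\O_1),\qquad (u,v)\mapsto \langle u,v\rangle
$$
is a continuous bilinear, and hence smooth, map (continuity is a direct consequence of the Banach algebra property of $C^{1,\alpha}$). The derivative formula then follows from the product rule: differentiating $\cG(T,\phi)=\langle \nabla h_{T,\phi},\eta_{T,\phi}\rangle$ in $\phi$ in the direction $v$ gives two terms,
$$
\bigl\langle \nabla h_{T,\phi},\; [D_\phi \eta_{T,\phi}]v\bigr\rangle \;+\; \bigl\langle \nabla\bigl([D_\phi h_{T,\phi}]v\bigr),\;\eta_{T,\phi}\bigr\rangle,
$$
which, after identifying $\tilde\eta_{T,\phi}(v)=[D_\phi\eta_{T,\phi}]v$ and interpreting each inner product with a vector field as a directional derivative, is exactly (\ref{eq:cainenormader}).

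The only mildly subtle point — and the place I would allot most attention — is checking that $[D_\phi h_{T,\phi}]v$ lies in $C^{2,\alpha}(\overline{\O_1})$ so that its gradient on the boundary lies in $C^{1,\alpha}(\partial\O_1,\R^N)$, and that $\tilde\eta_{T,\phi}(v)\in C^{1,\alpha}(\partial\O_1,\R^N)$ (this is where one loses one derivative, since differentiating $\eta_{T,\phi}$ in $\phi$ introduces $v''$ through $\phi''$ terms, keeping us exactly in $C^{1,\alpha}$). Both facts follow from the smoothness hypothesis on $h$ and from the explicit formulas for $\eta_{T,\phi}$, so no genuine analytical difficulty arises, and the lemma reduces to a careful but routine application of the chain rule between the specified Banach spaces.
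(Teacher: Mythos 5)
The paper does not prove this lemma; it is stated with the reference ``(see \cite[Lemma 2.3]{Fall-MinlendI-Weth})'' and left as a citation, so there is no in-text argument to compare against. Your proof is the natural (and correct) one: write $\cG(T,\phi)=\bigl\langle \nabla h_{T,\phi}\big|_{\partial\O_1},\,\eta_{T,\phi}\bigr\rangle$, observe that each factor depends smoothly on $(T,\phi)$ with values in $C^{1,\alpha}(\partial\O_1,\R^N)$ --- the first because the trace-gradient $C^{2,\alpha}(\ov\O_1)\to C^{1,\alpha}(\partial\O_1,\R^N)$ is bounded linear and is composed with the assumed-smooth $(T,\phi)\mapsto h_{T,\phi}$; the second because of the explicit formulas \eqref{eq:outer2}, \eqref{eq:param} and \eqref{eq:rel-mu-phi-nu-phi} --- invoke the Banach-algebra structure of $C^{1,\alpha}$ to make the pointwise inner product a smooth bilinear map, and then differentiate by the product and chain rules. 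The resulting two terms are precisely the directional derivatives in \eqref{eq:cainenormader}. This establishes the lemma.

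One minor inaccuracy in your final paragraph: you assert that differentiating $\eta_{T,\phi}$ in $\phi$ ``introduces $v''$ through $\phi''$ terms.'' In fact $\phi''$ never appears. The outer normal $\Upsilon\circ\Psi_{T,\phi}$ in \eqref{eq:outer2} involves only $\phi$ and $\phi'$, and the Jacobian $d\Psi_{T,\phi}$ of \eqref{eq:param} likewise brings in at most $\phi'$; hence $[D_\phi\eta_{T,\phi}]v$ is an expression in $v$ and $v'$ with coefficients built from $\phi,\phi'$. The reason $\tilde\eta_{T,\phi}(v)$ lands in $C^{1,\alpha}$ rather than $C^{2,\alpha}$ is simply that $v'\in C^{1,\alpha}$ when $v\in C^{2,\alpha}$, not that a second derivative of $v$ intervenes. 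Your conclusion is correct, but the stated mechanism for the loss of regularity is off by one order.
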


In addition,  by \eqref{norde11bac}, \eqref{bifurcaequa} reads
\begin{align}\label{eq:opF}
F(T, \varphi)(t)=\nabla u_{T,\varphi}( (1+\varphi(\t))e_1,\frac{T}{2\pi}\tau))\cdot \Upsilon ((1+\varphi(\t))e_1, \frac{T}{2\pi}\tau) -(N-3),
 \end{align}
where $u_{T,\varphi}$ and $w_{T,\varphi}$ are related by \eqref{eqans}. Hence applying \eqref{eq:opF} with $\varphi=0$ and using \eqref{eq:uat1},   we see that
 $$
F(T, 0)=0 \qquad \text{for every $T>0$ }.
 $$

\section{Study of the linearised operator}\label{linearizaton}
The aim of this section is to study the spectral properties of the linearized operator
\begin{equation}
  \label{eq:def-H}
H_T \in \cL(C^{2,\alpha}_{p,e}(\R),C^{1,\alpha}_{p,e}(\R)),\qquad H_T(v)= D_\varphi \big|_{\varphi=0} F(T, \varphi)v.
\end{equation}

\begin{Proposition}\label{outer2}
For every  $T>0$, the linearised operator $H_T$ defined by  \eqref{eq:def-H}   is given by
 \begin{align}\label{eqlinear0}
 H_T(v)(\t) &= (N-3)\Bigl(\nabla \dot{u}(e_1,\t) \cdot(-e_1,0)-(N-2)v(\t)\Bigl),
\end{align}
where
$\dot{u}$ is solution to
\begin{equation}\label{eqlidotu}
	\begin{cases}
	\Bigl( \Delta_y + \left(\frac{2\pi}{T}\right)^2\frac{\partial^2}{ \partial \tau^2}\Bigr) \dot{u} =0& \quad  \textrm { in } \quad   \B^c_{1} \times \R  \vspace{3mm} \\
	\dot{u}(y,\t) =v(\tau) & \quad  (y,\tau) \in \partial ( \B^c_{1} \times \R)  \vspace{2mm} \\
	\dot{u}\rightarrow 0 &\emph{as $|z|\rightarrow\infty$ uniformly in $\t\in\mathbb{R}.$}
	\end{cases}
	\end{equation}
Furthermore, the  eigenfunctions of  $H_T$ are given by  $v_k(\tau):= \cos(k\t)$, $k \in \N \cup \{0\}$, and we have
\begin{equation}\label{eq:eigenvalues}
H_Tv_k= \lambda_k(T)v_k,
\end{equation}
where
\begin{align}\label{eq:neweigen}
\lambda_k(T)=-(N-3)\Lambda(\frac{2 k\pi}{T})
 \end{align}
with $\Lambda(0)=1$ and
\begin{equation}\label{eqVtilde}
\Lambda(\rho)=\left(N-2-\frac{\rho K_{\nu+1}(\rho)}{K_{\nu}(\rho)}\right) \quad \text{for $\rho>0$ with $\nu=\frac{N-3}{2}$.}
\end{equation}	
Here and in the following, for $\nu \ge 0$, the function $K_\nu$ is the modified Bessel function of the second kind of order $\nu$.
\end{Proposition}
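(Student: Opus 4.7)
The strategy is to differentiate $F$ explicitly at $\varphi = 0$ using the expression \eqref{eq:opF}, identify the shape derivative term on the fixed domain, and then diagonalise the resulting operator $H_T$ in the Fourier basis $\{\cos(k\tau)\}$ by separating variables, which reduces the radial equation to a modified Bessel equation with decay at infinity.

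For the linearisation, I start from $F(T,\varphi)(\tau) = \nabla u_{T,\varphi}((1+\varphi(\tau))e_1,\tfrac{T}{2\pi}\tau)\cdot \Upsilon((1+\varphi(\tau))e_1,\tfrac{T}{2\pi}\tau) - (N-3)$ and apply the product rule at $\varphi = 0$ in a direction $v \in C^{2,\alpha}_{p,e}(\R)$. Three terms appear. At $\varphi = 0$ one has $u_{T,0}(z,t) = |z|^{3-N}$, so $\nabla u_{T,0}(e_1,t) = ((3-N)e_1,0)$, and by \eqref{eq:outer2} the variation of $\Upsilon$ is $(0,\tfrac{2\pi}{T}v'(\tau))$; being orthogonal to $(e_1,0)$, it contributes nothing to the product. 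The variation of the base point produces $v(\tau) D^2 u_{T,0}\big|_{(e_1,t)}(e_1,0)\cdot(-e_1,0)$, which a direct computation from $\partial_{z_iz_j}|z|^{3-N}\big|_{z=e_1} = (3-N)[\delta_{ij} + (1-N)\delta_{i1}\delta_{j1}]$ evaluates to $-(N-3)(N-2)v(\tau)$. Finally, the $\varphi$-variation of $u_{T,\varphi}$ produces the Eulerian shape derivative term. To identify it I use the pull-back identity $w_{T,\varphi} = u_{T,\varphi}\circ \Psi_{T,\varphi}$ together with $D_\varphi w_{T,\varphi}\big|_{\varphi=0}v\big|_{\partial\Omega_1} = 0$ (since $w_{T,\varphi}\big|_{\partial\Omega_1} = 1$) and the easily computed $D_\varphi \Psi_{T,\varphi}\big|_{\varphi=0}v(y,\tau) = (v(\tau)y/|y|^2,0)$; comparing boundary values shows that the Eulerian shape derivative equals $(N-3)\dot u$, where $\dot u$ is precisely the solution of \eqref{eqlidotu} with boundary data $v(\tau)$. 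Collecting the three contributions gives \eqref{eqlinear0}.

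For the spectral part, I substitute $v_k(\tau) = \cos(k\tau)$ into \eqref{eqlidotu} and look for separated solutions $\dot u(y,\tau) = f(|y|)\cos(k\tau)$. Using that the Laplacian of a radial function on $\R^{N-1}$ reads $f''(r) + \tfrac{N-2}{r}f'(r)$, with $\rho := \tfrac{2\pi k}{T}$ the problem reduces to
\begin{equation*}
f''(r) + \tfrac{N-2}{r}f'(r) - \rho^2 f(r) = 0, \qquad f(1) = 1, \qquad \lim_{r\to\infty} f(r) = 0.
\end{equation*}
This is the modified Bessel equation, whose two independent solutions are $r^{-\nu} I_\nu(\rho r)$ and $r^{-\nu} K_\nu(\rho r)$ with $\nu = \tfrac{N-3}{2}$. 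Only the $K_\nu$-branch decays at infinity, so the unique admissible solution is $f(r) = r^{-\nu} K_\nu(\rho r)/K_\nu(\rho)$. Differentiating and applying the appendix identity $K_\nu'(\rho) = -K_{\nu+1}(\rho) + \tfrac{\nu}{\rho}K_\nu(\rho)$ yields $f'(1) = -\rho K_{\nu+1}(\rho)/K_\nu(\rho)$. Inserting this into \eqref{eqlinear0} gives $H_T v_k = -(N-3)\bigl[(N-2) - \rho K_{\nu+1}(\rho)/K_\nu(\rho)\bigr]\cos(k\tau) = -(N-3)\Lambda(\rho) v_k$, as claimed. The case $k = 0$ is treated separately: the radial Laplace equation with unit boundary value and decay enforces $\dot u(y,\tau) = |y|^{3-N}$, whence $\partial_{y_1}\dot u(e_1,\tau) = 3-N$ and the formula \eqref{eqlinear0} evaluates to $-(N-3)$, consistent with $\Lambda(0) = 1$.

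The main obstacle I anticipate is the first step: the shape-derivative bookkeeping must be honest, carefully distinguishing the material derivative $\dot w$ on the fixed domain (which vanishes on $\partial\Omega_1$) from the Eulerian shape derivative on the moving domain (which does not), and reconciling them through the pull-back map $\Psi_{T,\varphi}$; in particular one has to confirm rigorously that the $\Upsilon$-variation term drops out despite the non-compactness of the cylinder. Once this is settled and the correct boundary data $(N-3)v$ for the Eulerian derivative is identified, the remaining eigenvalue computation is a classical separation of variables that hinges only on the decay selection $K_\nu$ versus $I_\nu$ and a standard Bessel recurrence.
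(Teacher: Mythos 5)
Your proof is correct and arrives at the same formula and eigenvalue computation as the paper; the difference is purely organizational. The paper linearises via the decomposition $w_{T,\varphi}=W_{T,\varphi}-V_{T,\varphi}$ (the explicit pull-back $W_{T,\varphi}=u_1\circ\Psi_{T,\varphi}$ minus a remainder $V_{T,\varphi}$ that vanishes at $\varphi=0$), feeding the $V$-part into the abstract Lemma~4.2, whereas you apply the product rule directly to the moving-domain expression~\eqref{eq:opF} for $F$ and reconcile the Eulerian shape derivative with the fixed-domain boundary data through the chain rule for $w_{T,\varphi}=u_{T,\varphi}\circ\Psi_{T,\varphi}$. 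These are the same three contributions in different clothing: your \emph{base-point variation} plus the vanishing $\Upsilon$-variation reproduce the paper's $D_\varphi G(T,\varphi)v=-(N-2)(N-3)v$, and your Eulerian derivative $(N-3)\dot u$ is exactly $-D_\varphi V_{T,\varphi}v=(N-3)\psi_{T,v}$, so the harmonic problem~\eqref{eqlidotu} with boundary data $v$ emerges identically. The Bessel separation of variables, the selection of the $K_\nu$-branch, and the recurrence $K_\nu'(\rho)=\tfrac{\nu}{\rho}K_\nu(\rho)-K_{\nu+1}(\rho)$ giving $f'(1)=-\rho K_{\nu+1}(\rho)/K_\nu(\rho)$ all match the paper's computation, so no gap remains.
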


\proof
To prove \eqref{eqlinear0}   and   \eqref{eqlidotu},  we consider the functions
\begin{align}\label{eq: ChafU1}
W_{T, \varphi}(y, \tau):&= u_{1}( \kappa( |y|^{2}, \varphi(\tau))y, \frac{T}{2\pi}\tau) = |y|^{3-N} \Bigl(1+\frac{\varphi(\tau)}{|y|^{2}}\Bigl)^{3-N}\nonumber\\
V_{T, \varphi}(y, \tau):&= W_{T, \varphi}(y, \tau)-w_{T, \varphi}(y, \tau)
 \end{align}
Since  $u_{1}(y,\tau)=|y|^{3-N}$ is harmonic in $\R^N \setminus (\{0\} \times \R)$, we have, by \eqref{eqans} and \eqref{ebiject},
\begin{align}\label{eq:Harminicbac}
L_{T, \varphi}  W_{T, \varphi}  =0& \quad  \textrm { in } \quad   \O_1.
 \end{align}
 Moreover,
\begin{align}\label{eqWT}
\begin{cases}
L_{T, \varphi}  V_{T, \varphi}  = 0 & \quad  \textrm { in } \quad     \O_1    \vspace{3mm} \\
V_{T, \varphi}(y,\tau)  =(1+\varphi(\tau))^{3-N}-1& \quad\textrm{for $(y,\tau) \in \partial \O_1$}
\end{cases}
 \end{align}
 and
\begin{align}\label{eq: Wvanish}
V_{T, 0} \equiv0 \quad  \textrm { in } \quad     \ov \O_1
\end{align}
Differentiating  \eqref{eqWT} with respect to $\varphi$ at $\varphi \equiv0$ and setting
\begin{align}\label{eq:defww}
\psi_{T, v}:=\frac{1}{3-N} [D_\varphi \big|_{\varphi=0}   V_{T, \varphi}]v \qquad \text{for $v \in C^{2,\alpha}_{p,e}(\R)$,}
\end{align}
we find that
\begin{align}\label{eq:def-w1}
\begin{cases}
	\Bigl( \Delta_y + \left(\frac{2\pi}{T}\right)^2\frac{\partial^2}{ \partial \tau^2}\Bigr)   \psi_{T, v}  = 0&
\textrm { in } \quad\O_1 \vspace{3mm}\\
   \psi_{T, v}  =  v &
\textrm{ on }  \partial \O_1 \vspace{3mm}\\
 \psi_{T, v} \rightarrow 0\quad  \emph{as $|y|\rightarrow\infty$ uniformly in $\tau\in\mathbb{R}.$}
\end{cases}
\end{align}

We now put $G(T, \varphi)(\tau):= \dfrac{\partial   W_{T, \varphi}  } {\partial \eta_{T,\varphi}}(e_1,\tau)$ for $\tau \in \R$. By (\ref{eq:outer2}), \eqref{norde11bac} and \eqref{eq: ChafU1}, we have
\begin{align*}
  &G(T, \varphi)(\tau)=(\nabla u_{1})( (1+\varphi(\t))e_1,\t))\cdot \Upsilon( (1+\varphi(\t))e_1,\t)\\
  &= \Bigl(-(N-3)(1+\varphi(\t))^{2-N} e_1\Bigr) \cdot \Bigl( - \Bigl(1+ \bigl(\frac{2\pi}{T} \bigl)^2 \varphi'^2(\t)\Bigr)^{-\frac{1}{2}}e_1 \Bigr)\\
  &= (N-3)(1+\varphi(\t))^{2-N}\Bigl(1+ \bigl(\frac{2\pi}{T} \bigl)^2 \varphi'^2(\t)\Bigr)^{-\frac{1}{2}}.
 \end{align*}
Consequently,
\begin{align}\label{eqlinear2}
\Bigl[D_\varphi \big|_{\varphi=0}  G(T, \varphi)v\Bigr](\t) = - (N-2)(N-3) v(\tau).
\end{align}
Moreover, by  \eqref{eq:cainenormader}  and since $V_{T, 0} \equiv0$ in $\ov \O_1$, we have
\begin{equation}
  \label{eq:proof-linearization-14}
\Bigl[D_\phi \big|_{\phi=0}  \frac{\partial V_{T, \varphi}}{\partial \eta_{T,\varphi} }\Bigr]\omega =
\partial_{\tilde \eta_{\phi}(\omega)} V_{T, 0}  +
\partial_{\eta}  \psi_{T, v} =
-(N-3)\partial_\eta \psi_{T, v} \qquad \text{on $\partial
\Omega_1$},
\end{equation}
where $\eta$ is the outer unit normal on $\partial \O_1$ with respect
to $g_{eucl}$ given by $\eta(y,\tau)= (-z,0)$.
Using  \eqref{eq: ChafU1}, (\ref{eqlinear2}) and (\ref{eq:proof-linearization-14}), we get
$$
\Bigl[D_\varphi \big|_{\varphi=0} \frac{\partial w_{T,\phi}}{\partial \eta_{T,\phi}}\Bigr] v=  \Bigl[D_\varphi
\big|_{\phi=0} \frac{\partial}{\partial \eta_{T,\phi}} (W_{T,\phi} - V_{T,\phi})( \sigma,\cdot)\Bigr]v
=    (N-3)\Bigl(\partial_{\eta}  \psi_{T, v} - (N-2) v\Bigl)
$$
which combined with \eqref{CR-map} and (\ref{eq:def-H}) yields
$$
 H_T(v)(\t)= \Bigl[D_\varphi \big|_{\varphi=0}   F(T, \varphi)v\Bigr](\tau) = (N-3)\Bigl(\nabla \dot{u}(e_1,\t) \cdot(-e_1,0)-(N-2)v(\t)\Bigl),
$$
as claimed in (\ref{eqlinear0}). This proves the first part of Proposition \ref{outer2}.\\

Next we claim that  the functions $\tau \mapsto v_k(\t):= \cos(k\t)$, $k \in \N \cup \{0\}$ are the eigenfunctions of the operator
$v\mapsto  H_T(v).$ This is clear for $k=0$, as the unique solution of (\ref{eqlidotu}) with $v \equiv 1$ is merely given by $(y,\t) \to u_1(y,\tau)= |y|^{3-N}$, and therefore $H_T(v) \equiv -(N-3)$ by (\ref{eqlinear0}).

Assuming $k\neq0$ from now on, we see that the unique solution  $\dot{u}_k$ to \eqref{eqlidotu} with $v = v_k$ can be expressed, by separation of variables, as
$\dot{u}_k (y,\tau)=u_k(|y|) \cos(k\tau)$ where $u_k$ solves the ODE boundary value problem
\begin{equation}
	\begin{cases}
    u''_k +\dfrac{N-2}{r}u'_k -(\frac{2\pi k}{T})^2 u_k =0&\emph{in $(1,\infty),$}  \vspace{3mm}\\
u_k (1)=1,  \vspace{3mm}\\
u_k (r)\rightarrow 0 &\emph{as $r\rightarrow\infty$.}
	\end{cases}
	\end{equation}
We set $\rho_k:= \frac{2\pi k}{T}$ and consider  the function $w: [\rho_k,\infty)\rightarrow\mathbb{R}$ defined by
\begin{equation}\label{eqwuu}
w(\rho)=u_k(\frac{\rho}{\rho_k}).
\end{equation}
We have
\begin{equation*}
	\begin{cases}
 w''(\rho) +\frac{N-2}{\rho}w'(\rho) -w(\rho) =0&\emph{in $(\rho_k,\infty),$} \vspace{3mm}\\
w (\rho_k)=1,  \vspace{3mm}\\
w (\rho)\rightarrow0 &\emph{as $\rho\rightarrow\infty$,}
\end{cases}
\end{equation*}
and setting   $g(\rho):=\rho^{\nu}w(\rho)$, where  $\nu=\frac{N-3}{2}$,  we see that  function  $g$  satisfies
\begin{equation}\label{eqmodif1}
\begin{cases}
g(\rho)+\frac{1}{\rho}g'(\rho)-(1+\frac{\nu{2}}{\rho^2})g(\rho)=0&\emph{in $(\rho_k,\infty),$} \vspace{3mm}\\
g (\rho)\rightarrow0 &\emph{as $\rho\rightarrow\infty$,}\vspace{3mm}\\
g(\rho_k)=\rho_k^{\nu}.
\end{cases}
\end{equation}

Up to a multiplicative constant, the modified Bessel function of second kind  $K_\nu$ is the unique solution to  \eqref{eqmodif1}. Since  the function $K_\nu$ is positive on $(0, +\infty)$ it follows  that
\begin{equation}\label{eqmodif}
w(\rho)= c\rho^{-\nu} K_\nu(\rho), \quad \rho\in (\rho_k, +\infty), \quad\textrm{for some  constant $c > 0$}.
\end{equation}
Furthermore, combining  \eqref{eqwuu} with  \eqref{eqmodif},
\begin{equation}\label{eqmodiuk}
u_k(r)=w(K_0r)=c\rho_k^{-\nu}r^{-\nu}K_{\nu}(\rho_kr)
\end{equation}
and it follows from $u_k (1)=1$ that $c= \rho_k^{\nu}/ K_{\nu}(\rho_k)$
and
\begin{equation}\label{eqmderuu}
u'_k(1)=c\rho_k^{-\nu}K_{\nu}(\rho_k)\Bigl(-\nu+ \rho_k\frac{ K'_{\nu}(\rho_k)}{ K_{\nu}(\rho_k)}\Bigl)=-\nu+ \rho_k\frac{ K'_{\nu}(\rho_k)}{ K_{\nu}(\rho_k)}
\end{equation}

Recalling  \eqref{eqlinear0} and \eqref{eqmderuu}, we find that
 \begin{align*}
H_T (v_k)(\t)  &= (N-3)\Biggl(\nabla  u_k(e_1,\t)\cdot(-e_1,0)-(N-2)v_k(\t)\Bigl)\nonumber\\
  &=-(N-3)\left(\frac{N-1}{2}+ \rho_k\frac{ K'_{\nu}(\rho_k)}{ K_{\nu}(\rho_k)}\right)v_k(\t) \nonumber\\
  &=-(N-3)\left(N-2-\rho_k\frac{ K_{\nu+1}(\rho_k)}{ K_{\nu}(\rho_k)}\right)v_k(\t),
\end{align*}
where we have used the relation
\[\rho K'_{\nu}(\rho)=\nu K_{\nu}(\rho)-\rho K_{\nu+1}(\rho).\]
Consequently,
\begin{align}
H_T (v_k)  &=-(N-3)\Lambda(\rho_k) v_k  =-(N-3)\Lambda(\frac{2k\pi}{T}) v_k
\end{align}
with $\Lambda$ given in (\ref{eqVtilde}). Hence (\ref{eq:eigenvalues}) follows with $\lambda_k(T)$ given in (\ref{eq:neweigen}), and the proof is finished.
\QED

In the following result, we study the behaviour of the eigenvalue in  $\lambda_1(T)$ in   \eqref{eq:neweigen}.

\begin{Lemma}\label{dervtildeV}
For $k >0$ we have
\begin{equation} \label{eq:dasym1}
	\mu_k(T) \rightarrow
	\begin{cases}
	-(N-3), & T  \rightarrow +\infty ,\\
	+\infty, &T \rightarrow 0^+.
	\end{cases}
\end{equation}	
and for every $T>0$ we have
\be\label{eq:growth-eigen}
\lim \limits_{k\to\infty}   \frac{\lambda_k(T)}{k}  =\frac{2\pi (N-3)}{T}.
\ee
Moreover,  there exists a unique $T_*> \dfrac{2\pi}{\sqrt{N-2}}$ satisfying
\begin{equation}
\label{derv222-second-claim}
\text{ $\lambda_1(T_*)=0$, $\lambda'_1(T_*)< 0$ and $\lambda_k(T_*) \not = 0$ for $k \not = 1$.}
\end{equation}
\end{Lemma}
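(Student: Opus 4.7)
The plan is to phrase everything in terms of the function $\rho \mapsto \Lambda(\rho) = (N-2) - \rho f(\rho)$ from \eqref{eqVtilde}, with $f(\rho) := K_{\nu+1}(\rho)/K_\nu(\rho)$ and $\nu=(N-3)/2$, using classical asymptotics and a sharp monotonicity property of $f$ that I expect to be collected in the appendix. The two limits in \eqref{eq:dasym1} reduce to $\Lambda(\rho_k) \to 1$ as $\rho_k := 2k\pi/T \to 0^+$ and $\Lambda(\rho_k) \to -\infty$ as $\rho_k \to \infty$. The first follows from the small-argument asymptotic $K_\nu(\rho) \sim \tfrac12\Gamma(\nu)(2/\rho)^\nu$ (valid since $\nu>0$), which gives $\rho f(\rho) \to 2\nu = N-3$. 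The second, together with \eqref{eq:growth-eigen}, is a consequence of the large-argument asymptotic $K_\nu(\rho) \sim K_{\nu+1}(\rho) \sim \sqrt{\pi/(2\rho)}\,e^{-\rho}$, which gives $f(\rho) \to 1$ and hence $\Lambda(\rho) \sim -\rho$, so that $\lambda_k(T)/k = -(N-3)\Lambda(\rho_k)/k \to (N-3)(2\pi/T)$.

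The assertion \eqref{derv222-second-claim} rests on two ingredients. The first is the identity
\[
\Lambda'(\rho) = -\frac{g(\rho)^2 - 2\nu\, g(\rho) - \rho^2}{\rho}, \qquad g(\rho) := \rho f(\rho),
\]
obtained by differentiating $\Lambda$ and applying the Riccati equation $f'(\rho) = f(\rho)^2 - \tfrac{2\nu+1}{\rho} f(\rho) - 1$, which itself follows from the standard recurrences $K_\nu' = -K_{\nu+1} + \nu K_\nu/\rho$ and $K_{\nu+1}' = -K_\nu - (\nu+1) K_{\nu+1}/\rho$. The second, which I would take from the appendix, is the sharp lower bound
\[
f(\rho) > \frac{\nu + \sqrt{\nu^2+\rho^2}}{\rho} \qquad \text{for all } \rho > 0,
\]
equivalent to $g(\rho) > \nu + \sqrt{\nu^2+\rho^2}$, which in turn is equivalent to $g(\rho)^2 - 2\nu g(\rho) - \rho^2 > 0$. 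Substituting into the identity yields $\Lambda'(\rho) < 0$ for all $\rho>0$, so $\Lambda$ is strictly decreasing.

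Combined with $\Lambda(0^+) = 1$ and $\Lambda(\rho)\to-\infty$, this strict monotonicity produces a unique zero $\rho_*\in(0,\infty)$. Setting $T_* := 2\pi/\rho_*$ we get $\lambda_1(T_*) = 0$ and, from $\lambda_1'(T_*) = (N-3)(2\pi/T_*^{\,2})\Lambda'(\rho_*)$, also $\lambda_1'(T_*)<0$. To verify $T_*>2\pi/\sqrt{N-2}$, i.e.\ $\rho_*<\sqrt{N-2}=\sqrt{2\nu+1}$, I evaluate the bound at $\rho=\sqrt{2\nu+1}$: $g(\sqrt{2\nu+1}) > \nu + \sqrt{\nu^2+(2\nu+1)} = \nu+(\nu+1) = N-2$, so $\Lambda(\sqrt{N-2})<0$, forcing $\rho_*$ to lie in $(0,\sqrt{N-2})$ by the intermediate value theorem. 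Finally, $\lambda_0(T_*)=-(N-3)\Lambda(0)=-(N-3)\ne 0$, and for $k\ge 2$ the strict monotonicity of $\Lambda$ together with $k\rho_* > \rho_*$ gives $\Lambda(k\rho_*)<0$, hence $\lambda_k(T_*)>0$. The main technical obstacle is the sharp lower bound on the Bessel ratio $f$, on which everything depends; I rely on this being available from the classical theory of modified Bessel functions in the appendix.
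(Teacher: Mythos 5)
Your proof is correct and genuinely strengthens the paper's argument at the key monotonicity step, so the comparison is worth recording. The paper computes $\Lambda'(\rho)$ using the two recurrences for $K_\nu', K_{\nu+1}'$, then evaluates it \emph{only at a zero} of $\Lambda$: there the constraint $\rho K_{\nu+1}(\rho)/K_\nu(\rho)=N-2$ collapses $-\Lambda'(\rho)$ to $\tfrac{N-2}{\rho}-\rho$, and the appendix bound (\ref{eq:compI1I00}) is used merely to show that such a zero must satisfy $\rho<\sqrt{N-2}$, whence $-\Lambda'>0$ there. Uniqueness of the zero then follows because every zero is a strict $+\to-$ crossing and $\Lambda(0^+)=1>0$. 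You instead combine the same two recurrences into the Riccati equation $f'=f^2-\tfrac{2\nu+1}{\rho}f-1$ for $f=K_{\nu+1}/K_\nu$, obtain the identity $\Lambda'(\rho)=-\bigl(g(\rho)^2-2\nu g(\rho)-\rho^2\bigr)/\rho$ with $g=\rho f$, and observe that (\ref{eq:compI1I00}), which says precisely $g>\nu+\sqrt{\nu^2+\rho^2}$, places $g$ above the larger root of $g^2-2\nu g-\rho^2$. This gives $\Lambda'<0$ on all of $(0,\infty)$, i.e.\ global strict monotonicity, from which the uniqueness of $\rho_*$, the sign $\lambda_1'(T_*)<0$, and the nonvanishing $\lambda_k(T_*)\neq0$ for $k\neq1$ all drop out transparently. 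Both proofs use the same two Bessel recurrences, the same asymptotics for (\ref{eq:dasym1}) and (\ref{eq:growth-eigen}), and the same appendix inequality; your version just packages the inequality so that it yields global monotonicity rather than local behaviour at zeros, which makes the final bookkeeping (uniqueness and the signs of $\lambda_k(T_*)$) immediate. The bound $T_*>2\pi/\sqrt{N-2}$ is obtained equivalently in both: the paper deduces $\rho_*<\sqrt{N-2}$ from the zero constraint plus (\ref{eq:compI1I00}), you deduce it by evaluating $\Lambda$ at $\sqrt{N-2}$ and invoking monotonicity.
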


\proof
The proof of the above result is achieved studying the asymptotics for the function $\Lambda$ defined in \eqref{eqVtilde} with $\nu = \frac{N-3}{2}$. By \eqref{eqasymp1} and (\ref{eqasymp2}) in the appendix, we have
\begin{align*}
	\rho\frac{K_{\nu+1}(\rho)}{K_{\nu}(\rho)}\rightarrow
	\begin{cases}
	2\nu =N-3, &\rho  \rightarrow 0^+,\\
	+\infty, &\rho  \rightarrow +\infty.
	\end{cases}
\end{align*}
and therefore (\ref{eq:dasym1}) follows. Furthermore, since
\begin{align*}
	\frac{K_{\nu+1}(\rho)}{K_{\nu}(\rho)}\rightarrow 1  \qquad  \textrm{as}\quad \rho  \rightarrow +\infty
\end{align*}
by \eqref{eqasymp1}, we have
\[\lim \limits_{\rho\to\infty}\frac{\Lambda(\rho)}{\rho}=\lim \limits_{\rho\to\infty}\left(\frac{N-2}{\rho}-\frac{K_{\nu+1}(\rho)}{K_{\nu}(\rho)}\right)=-1\]
and hence
$$\lim \limits_{k\to\infty}   \frac{\lambda_k(T)}{k}= -\frac{2\pi(N-3)}{T}\lim \limits_{k\to\infty}\frac{T}{2\pi k}\Lambda(\frac{2\pi k}{T})=\frac{2\pi(N-3)}{T},$$
proving \eqref{eq:growth-eigen}.

To prove \eqref{derv222-second-claim}, we note that the function $T \mapsto \mu_1(T)$ has a positive zero by \ref{eq:growth-eigen} and (\ref{eq:dasym1}). We then use \eqref{relateK1} and \eqref{relateK2} to compute
\begin{align}\label{eq:deriv1}
-\Lambda'(\rho)&=\frac{K_{\nu+1}(\rho)}{K_{\upsilon}(\rho)}+\rho\frac{K'_{\nu+1}(\rho)}{K_{\nu}(\rho)}-\rho\frac{K'_{\nu}(\rho)}{K_{\nu}(\rho)}  \frac{K_{\nu+1}(\rho)}{K_{\nu}(\rho)},\nonumber\\
&=\frac{K_{\nu+1}(\rho)}{K_{\nu}(\rho)} -\rho -(\nu+1) \frac{K_{\nu+1}(\rho)}{K_{\nu}(\rho)}+ \Bigl( -\nu +\rho\frac{K_{\nu+1}(x)}{K_{\nu}(\rho)} \Bigl)  \frac{K_{\nu+1}(\rho)}{K_{\nu}(\rho)},\nonumber\\
&=\frac{K_{\nu+1}(\rho)}{K_{\nu}(\rho)}\Bigl(-2\nu  + \rho\frac{K_{\nu+1}(x)}{K_{\nu}(\rho)} \Bigl)-\rho \qquad \text{for $\rho>0$.}
\end{align}
Next, we consider a point $\rho>0$ with $\Lambda(\rho)=0$. Then
\begin{align}\label{eq:zero}
\frac{K_{\nu+1}(\rho)}{K_{\nu}(\rho)}=\frac{N-2}{\rho}
\end{align}
and, by \eqref{eq:compI1I00},
$$
\nu +\sqrt{\rho^2+\nu^2} < \rho\frac{K_{\nu+1}(\rho)}{ K_{\nu}(\rho)} =N-2   \Longleftrightarrow  \sqrt{\rho^2+\nu^2}<  -\nu + N-2  = \frac{N-1}{2}
$$
since $\nu = \frac{N-3}{2}$, which gives
\begin{align}\label{eq:inezer}
\rho < \sqrt{N-2}.
\end{align}
Plugging  \eqref{eq:zero} in  \eqref{eq:deriv1} and using \eqref{eq:inezer} yields
\begin{align}\label{eq:derineiv2}
-\Lambda'(\rho)&=\frac{N-2}{\rho}-\rho>0.
\end{align}
It thus follows that the function $\Lambda$ has a unique zero $\rho_*$ on $(0,\infty)$ satisfying $\rho* < \sqrt{N-2}$. Hence  \eqref{eq:neweigen} gives  \eqref{derv222-second-claim} with  $T_*> \dfrac{2\pi}{\sqrt{N-2}}$.
\QED

\section{Proof of the main result}
\label{sec:Main result}

In  this section we complete the proof of Theorem~\ref{Theo1}. For this we consider  the fractional Sobolev spaces
\begin{equation}
  \label{eq:def-hpe}
H^{\s}_{p,e} := \Bigl \{v \in H^{\s}_{loc}(\R) \::\: \text{
$v$ even and $2\pi$-periodic}\Bigl \}
\end{equation}
for $\s\geq 0$, and we put $L^2_{p,e}:= H^{0}_{p,e}$. Note that
$L^2_{p,e}$ is a Hilbert space with scalar product
$$
(u,v) \mapsto \langle u,v \rangle_{L^2_{p,e}} :=
\int_{-\pi}^{\pi} u(\tau)v(\tau)\,d\tau \qquad \text{for $u,v \in
L^2_{p,e}$.}
$$
and   induced norm  denoted by $\|\cdot\|_{L^2_{p,e}}$.  We define for all  $k \in \N \cup \{0\}$,  $v_k(t):=\cos(k\tau)$.
$\|v_k\|_{L^2_{p,e}}=\sqrt{\pi}$, the set
$\{\frac{v_k}{\sqrt{\pi}},\:\,k\in \N\}$ forms  a
complete orthonormal basis of $L^2_{p,e}$.
Moreover, $H^\s_{p,e}
\subset L^2_{p,e}$ is characterized as the subspace of all functions
$v \in L^2_{p,e}$ such that
\begin{align}\label{eqchatacte}
\sum_{k \in \N } (1+k^2)^{\s} \langle v, v_k
\rangle_{L^2_{p,e}} ^2 < \infty.
\end{align}
Therefore, $H^\s_{p,e}$ is also a Hilbert space with scalar product
\begin{equation}
  \label{eq:scp-hj}
(u,v) \mapsto  \sum_{k \in \N } (1+k^2)^{\s} \langle u, v_k
\rangle_{L^2_{p,e}}  \langle v, v_k \rangle_{L^2_{p,e}}  \qquad
\text{for $u,v \in H^\s_{p,e}$.}
\end{equation}
Set
\begin{equation}
  \label{eq:def-Vell}
W_k:=\textrm{span}\left\{ v_k \right\}
\,\subset \,\bigcap_{j \in \N} H^j_{p,e}
\end{equation}
for $k \in  \N$. Then from Proposition \ref{outer2},   the spaces $W_k$ are the eigenspaces of the operator $H_T$ in (\ref{eq:def-H}) corresponding to the eigenvalues $\lambda_k(T)$, i.e., we have
 \be\label{eq:-Lv-Fpurier-slab}
 H_T v =\lambda_k(T) v \qquad \textrm{ for every $v\in W_k$}.
\ee
We also consider their orthogonal complements in $L^2_{p,e}$, given by
$$
W_k^\perp:=\left\{ w\in L^2_{p,e} \,:\,
\int_{-\pi}^{\pi} \cos(k s) w(s)\,ds=0 \right\}
$$
as well as  the the spaces
$$
X:=\biggl\{  \phi: \mathbb{R} \rightarrow \mathbb{R},\quad  \phi\in  C^{2,\alpha}(\mathbb{R}) \textrm{ is even and $2\pi$-periodic}\biggl\},
$$
and
$$
Y:=\biggl\{  \phi: \mathbb{R} \rightarrow \mathbb{R},\quad  \phi\in  C^{1,\alpha}(\mathbb{R}) \textrm{ is even and $2\pi$-periodic} \biggl\}.
$$

\begin{Proposition}\label{prop:All-HYPcRANDAL}
 There exists a unique $T_*>0$ such that the linear operator $H_*:= H_{T_*}: X \to Y $  has the following properties.
\begin{itemize}
\item[(i)] The kernel $N(H_*)$ of $H_*$ is spanned by the function $\cos(\cdot).$
\item[(ii)] $H_* \big|_{X\cap W_1^\perp}: X\cap W_1^\perp  \to Y \cap W_1^\perp$ is an isomorphism.
\end{itemize}
Moreover
\begin{equation}
  \label{eq:transversality-cond}
\partial_T \Bigl|_{T= T_*} H_Tv_1  =\lambda_1'(T_*)v_1 \not  \in Y \cap W_1^\perp.
\end{equation}
 \end{Proposition}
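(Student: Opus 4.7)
The plan is to read all three assertions off Proposition \ref{outer2} and Lemma \ref{dervtildeV}. I let $T_*$ be the unique value furnished by Lemma \ref{dervtildeV}, so that $\lambda_1(T_*)=0$, $\lambda_1'(T_*)<0$, and $\lambda_k(T_*)\neq 0$ for every $k\in\N\cup\{0\}$ with $k\neq 1$. Uniqueness of $T_*$ in $(0,\infty)$ is inherited from the uniqueness of the zero $\rho_*$ of $\Lambda$ proved in Lemma \ref{dervtildeV}. By Proposition \ref{outer2}, $H_{T_*}$ is diagonalised on the complete $L^2_{p,e}$-orthogonal system $\{v_k\}_{k\geq 0}$ via $H_{T_*}v_k=\lambda_k(T_*)v_k$.

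For (i), any $v\in X\subset L^2_{p,e}$ with $H_*v=0$ admits an $L^2_{p,e}$-Fourier expansion $v=\sum_{k\geq 0}\hat v_k v_k$. The diagonal action of $H_*$ forces $\lambda_k(T_*)\hat v_k=0$ for every $k$, whence $\hat v_k=0$ for $k\neq 1$, and $v\in\spann\{v_1\}=\spann\{\cos(\cdot)\}$. The converse inclusion is immediate, which proves (i). Injectivity in (ii) is then automatic since $W_1^\perp\cap\spann\{v_1\}=\{0\}$.

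For the surjectivity in (ii), given $f\in Y\cap W_1^\perp$ I would start from the formal Fourier inverse
\[
u\;:=\;\sum_{k\neq 1}\frac{\hat f_k}{\lambda_k(T_*)}\,v_k.
\]
The asymptotics \eqref{eq:growth-eigen} show that $\lambda_k(T_*)\sim\tfrac{2\pi(N-3)}{T_*}k$, so the formal inverse recovers exactly one order of differentiability, matching a map $C^{1,\alpha}\to C^{2,\alpha}$. To promote the formal series to an element of $X$, I would use the representation $H_T(v)=(N-3)\bigl(\partial_\eta \dot u-(N-2)v\bigr)$ extracted from the proof of Proposition \ref{outer2}, with $\dot u$ the periodic harmonic extension solving \eqref{eqlidotu}. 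This exhibits $H_T$ as an affine combination of the Dirichlet-to-Neumann map for that extension problem and the identity, hence a first-order elliptic boundary operator. Combined with Schauder estimates for $\dot u$ (in the spirit of Section \ref{sec:dirichlet}), this yields boundedness $X\to Y$ and the Fredholm property with index zero. Self-adjointness of the Dirichlet-to-Neumann operator in $L^2_{p,e}$, together with (i), identifies the cokernel as $\spann\{v_1\}$, so that $H_*\big|_{X\cap W_1^\perp}$ is the desired isomorphism onto $Y\cap W_1^\perp$.

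Finally, for \eqref{eq:transversality-cond} I would differentiate the eigenvalue identity $H_T v_1=\lambda_1(T) v_1$ in $T$ at $T_*$; since $v_1$ is independent of $T$, this gives $\partial_T|_{T=T_*}H_T v_1=\lambda_1'(T_*)v_1$, a nonzero scalar multiple of $v_1$ by Lemma \ref{dervtildeV}, and hence manifestly not in $Y\cap W_1^\perp$. The principal obstacle in the plan is the Hölder-scale regularity step in (ii): passing from the explicit spectral picture on $L^2_{p,e}$ to bounded invertibility modulo the kernel on $X$ and $Y$. I expect this to be handled via the elliptic estimates for $\dot u$ already prepared in Section \ref{sec:dirichlet} together with standard trace theory for its normal derivative, rather than by a direct Fourier summation argument.
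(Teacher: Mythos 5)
Your treatment of (i), the transversality condition \eqref{eq:transversality-cond}, the choice and uniqueness of $T_*$, and the $L^2$-Fourier construction of the candidate inverse all match the paper. The genuine gap is in the Hölder-regularity step for (ii), which you honestly flag as ``the principal obstacle'' but do not resolve; your proposed resolution (``Schauder estimates for $\dot u$ in the spirit of Section~\ref{sec:dirichlet}'') would not close the loop as stated. The Schauder estimates in Section~\ref{sec:dirichlet} bound $\dot u$ in $C^{2,\alpha}(\overline\Omega_1)$ \emph{given} $C^{2,\alpha}$ boundary data; here the boundary data is $w$, which is exactly the function whose $C^{2,\alpha}$ regularity you are trying to establish, so a direct appeal is circular. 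The Fredholm/self-adjointness framing of the Dirichlet-to-Neumann map is also not what the paper does, and it would require a separate argument to make rigorous on the Hölder scale.

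The paper's actual resolution is a $W^{2,p}$ bootstrap that exploits the specific algebraic form of the equation $H_*w=g$: rewriting \eqref{eqlinear0} as a Robin-type (oblique) boundary condition $(N-3)\,\partial_\nu \dot u(\pm e_1,\cdot) + \dot u(\pm e_1,\cdot) = f$ with $f := (1 + (N-3)(N-2))w + g$, one starts from $w\in H^2_{p,e}$ (obtained exactly via your Fourier-series estimate and \eqref{eq:growth-eigen}), hence $\dot u\in W^{2,2}_{loc}(\Omega_1)$. The trace theorem then gives $w = \dot u|_{\partial\Omega_1}\in W^{1,p_1}_{loc}(\partial\Omega_1)$ with $p_1 > 2$, which improves the regularity of $f$, which via the Grisvard $W^{2,p}$ theory for the Robin problem improves the interior regularity of $\dot u$, and so on; the integrability index strictly increases at each step until $\dot u\in W^{2,p}_{loc}$ for all $p$, hence $\dot u\in C^{1,\alpha}$ by Sobolev embedding, hence $w\in C^{1,\alpha}_{p,e}$, and finally a Hölder regularity theorem for the first-order boundary operator (Grisvard again) yields $\dot u\in C^{2,\alpha}_{p,e}(\overline\Omega_1)$ and thus $w\in C^{2,\alpha}_{p,e}$. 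The crucial idea your proposal is missing is this Robin reformulation, which lets the a priori weak knowledge of $w$ appear only as a lower-order term that can be bootstrapped, breaking the circularity you would otherwise run into.
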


\begin{proof} By Lemma \ref{dervtildeV}, we have the existence  of a unique $T_*>0$ such
that $\lambda_1(T_*)=0$, $ \mu'_1(T_*)<0$ and  $\lambda_k(T_*)\ne 0$  for all $k\ne1$.  This with \eqref{eq:-Lv-Fpurier-slab}  imply that
$N(H_*)=\textrm{span}\{v_1\} =W_1$ and we  obtain  (i) and \eqref{eq:transversality-cond}.\\

To prove (ii), we pick  $ g\in Y\cap W_1^\perp$ and consider the equation
\begin{equation}\label{invert}
H_*w =  g.
\end{equation}
Using  \eqref{eq:eigenvalues}, the equation \eqref{invert} is uniquely solved  by the function
$$w(s)=\sum_{\ell \in \N \setminus \{1\}}w_\ell v_\ell(s),$$ where
\begin{equation}\label{eq:coeff}
w_\ell=\frac{1}{\pi \lambda_\ell(T_*)}  \langle g, v_\ell  \rangle_{L^2_{p,e}}, \quad  \ell\ne 1.
\end{equation}
In addition,
\begin{align}\label{eqchatacte3}
\sum_{\ell \in \N \setminus \{1\}} (1+\ell^2)^{2} \langle w, v_\ell
\rangle_{L^2_{p,e}} ^2 =\frac{1}{\pi } \sum_{\ell \in \N \setminus \{1\}} \frac{(1+\ell^2)}{ \lambda_\ell^2(T_*)} (1+\ell^2)  \langle g, v_\ell
\rangle_{L^2_{p,e}} ^2.
\end{align}
Since  $g\in C^{1, \alpha}_{p,e}(\R) \subset H^{1}_{loc}(\R)$, we have  $g\in H^{1}_{p,e}$. This combined with  \eqref{eqchatacte}  and the first asymptotic in Lemma  \ref{dervtildeV}  allow to see that   the right hand side in \eqref{eqchatacte3} is bounded, which implies   $w\in H^{2}_{p,e}$.  We now show that $w\in  C^{2,\alpha}_{p,e}(\partial \O_1)$.

Recalling Proposition \ref{outer2} \eqref{eqlinear0},  \eqref{eq:coeff} reads
\begin{align}\label{eqlidotu2}
(N-3)\nabla \dot{u}(e_1, \tau)\cdot(-e_1,0)=(N-3)(N-2)w(\tau)+g(\tau) 
	\end{align}
where
$\dot{u} $ is the unique even and $2\pi$ periodic solution in $\tau$ of
\begin{equation}\label{eqlidotu3}
	\begin{cases}
	\Delta \dot{u} =0& \quad  \textrm { in } \quad   \O_{1}  \vspace{3mm} \\
	\dot{u}(y,\tau)=w( \tau ) & \quad  \textrm { in } \quad  \partial  \O_{1}  \vspace{2mm} \\
	\dot{u}\rightarrow0 &\emph{as $|y|\rightarrow\infty$ uniformly in $\tau\in\mathbb{R}.$}\\
	\end{cases}
	\end{equation}
Furthermore  since  $w \in H^{2}_{p,e}$, we have   by  standard elliptic regularity that $\dot{u} \in  W^{2,2}_{loc}( \O_{1}) $.
 	
We now show that
\begin{equation}\label{eqregular11}
 \dot{u}\in C^{2,\alpha}_{p,e}(\ov \O_1).
\end{equation}
The fact that  \eqref{eqregular11} holds  follows from a  similar argument   as in the proof of  \cite[Proposition 4.1]{Fall-MinlendI-Weth} (see also  the proof of  \cite[Proposition 5.1]{Fall-MinlendI-Weth2}). We give the details here for the reader's convenience. The  regularity property in  \eqref{eqregular11} is obtained from  \cite[Theorem 6.3.2.1]{Grisvard} once we  show  that
\begin{equation}\label{eqre2233}
 \dot{u}\in W^{2,p}_{loc}(\O_1) \ \mbox{ for any } p\in(1,\infty)
 \end{equation}
Indeed if  \eqref{eqre2233} holds then by Sobolev embedding, we get   $\dot{u}\in C^{1,\alpha}_{p,e}(\ov \O_1)$ and hence  $ w\in C^{1,\alpha}_{p,e}(\partial \O_1)$  by  \eqref{eqlidotu3}. Then  applying  \cite[Theorem 6.3.2.1]{Grisvard}  with the order $d=1$ to the boundary operator (see  \cite[Section 2.1]{Grisvard}) yields \eqref{eqregular11}.

To see \eqref{eqre2233}, we prove by induction that
\begin{equation}\label{eq.errt2}
\dot{u}\in W^{2,p_{\ell }}_{loc}(\O_1) 
 \end{equation}
for a sequence of numbers $p_\ell \in [2, \infty)$ with $p_0 = 2$ and $p_{\ell+1} \geq \frac{ N-1 }{N-p_\ell }p_\ell $ for $\ell \geq 0.$
Clearly, it holds for $p_0=2$. Let us now assume that \eqref{eq.errt2} holds for some $p_\ell \geq 2.$

We consider the following two cases:\\
\textbf{Case 1:} $p_\ell < N.$ By the trace Theorem, \cite[Theorem 5.4]{Adams},  we can get that
$$\dot{u}|_{\partial\O_1}\in W^{1,p_{\ell +1}}_{loc}(\partial\O_1)\ \mbox{ with } p_{\ell +1} :=\frac{ N-1 }{N-p_\ell }p_\ell  \geq \frac{N-1}{N-2}p_\ell ,$$
therefore $w\in W^{1,p_{\ell+1}}_{loc}(\partial\O_1).$ Since $g\in C^{1, \alpha}(\R)\subset W^{1,p_{\ell +1}}_{loc}(\partial\O_1)$ and by \eqref{eqlidotu2},
\begin{equation*}
(N-3) \partial_\nu \dot{u}( \pm e_1, \tau) + \dot{u} ( \pm e_1, \tau)  = f(\tau),
	\end{equation*}
where
$$f(\tau):= (1+ (N-3)(N-2))w(\tau )+g(\tau ) \in  W^{1,p_{\ell +1}}_{loc}(\partial\O_1).$$  Therefore,  $\dot{u}\in W^{2,p_{\ell+1}}_{loc}(\O_1)$  by  \cite[Theorem 2.4.2.6]{Grisvard}.\\
\textbf{Case 2:} $p_\ell \geq N.$ The trace theorem implies that $w\in W^{1,p}_{loc}(\partial\O_1)$ for any $p > 2,$ and then we repeat the above argument to  deduce  that $\dot{u}\in W^{2,p_{\ell+1}}_{loc}(\O_1)$ for arbitrarily chosen $p_{\ell+1} \geq \frac{ N-1 }{N-2}p_\ell.$

Finally, we conclude that \eqref{eqre2233}  holds and \eqref{eqregular11} follows. By passing to the trace, we see with  \eqref{eqregular11} that $w\in  C^{2,\alpha}_{p,e}(\partial \O_1)$ and the proof is complete.
\QED
\end{proof}
We are now in position to apply the Crandall-Rabinowitz theorem \cite[Theorem 1.7]{M.CR}, which will give rise to the following
bifurcation property.

\subsection*{Proof of Theorem \ref{Theo1} (completed)}
We define
\begin{align*}
\mathcal{X}^{\perp} := \left\lbrace  v \in X : \int^{\pi}_{-\pi} v(\tau) \cos(\tau)\,d\tau =0\right\rbrace.
\end{align*}
By Proposition  \ref{prop:All-HYPcRANDAL}  and  the Crandall-Rabinowitz Theorem (see \cite[Theorem 1.7]{M.CR}), we then find ${\e}>0$ and a smooth curve
$$
(-{\e },{\e }) \to  (0,+\infty) \times \mathcal{U}\subset  \R_+ \times X, \qquad s \mapsto (T(s),\varphi_s)
$$
such that
\begin{enumerate}
\item[(i)] $F(T(s),\varphi_s)=0$ for $s \in (-{\e },{\e })$,
\item[(ii)] $ T (0)= T_{*}$ and
\item[(iii)]  $\varphi _s = s \cos(\cdot)+ s v _s $ for $s \in (-\e ,\e )$ with a smooth curve
$$
(-{\e },{\e }) \to \mathcal{X} ^{\perp}, \qquad s \mapsto v _s
$$
satisfying $ v_0 =0$
and
$$\int^{\pi}_{-\pi} v_s (\tau) \cos(\tau)\,d\tau=0.$$
\end{enumerate}
Finally, since  $F(T(s),\varphi_s)=0$ for $s \in (-{\e },{\e })$,  we see from  \eqref{CR-map} and  Lemma~\ref{lem:Oper-inv} that  the  function $w_s:=w_{T_s, \varphi_s}$ solves \eqref{eq:overcondi22}. Furthermore  recalling  \eqref{eqans}, the function
$$
u_s(z,t)= w_s\Bigl(\zeta( |z|^2,\varphi_s(\frac{2\pi t}{T_s}))z, \frac{2\pi t}{T_s}\Bigl)
$$
solves  \eqref{eq:ovder3} on $\O_{T_s, \varphi_s }$. The proof is  complete. \QED
\appendix
\section{Scale invariant Hölder estimates for solutions of the Poisson equation}
\label{sec:Schauderes}
In this section, we recall some well-known Hölder estimates for solutions of the Poisson equation
$\Delta u=f$, and we reformulate them in a scale-invariant way.
We  first recall the following classical  regularity  results (see \cite[Theorem 4.6]{GilbardTru} and  \cite[Theorem 6.6]{GilbardTru}).
\begin{Lemma} \label{Interestit}
 Let  $f \in C^{0,\alpha}(B_{1})$ and   $u \in C^{2,\alpha}(B_1)$ solve  the equation $-\Delta u = f$ in $B_1$. Then there exists a constant $C=C(N,\alpha)>0$ such that
  \begin{align}
    \label{eqe}
   \| u\|_{C^{2,\alpha} (B_{1/2})} \le C ( \|f\|_{L^\infty(B_1)} +[f]_{C^{0,\alpha}(B_1)} +  \|u_R\|_{L^\infty(B_1)}).
  \end{align}
\end{Lemma}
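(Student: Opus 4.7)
The plan is to derive this as a standard interior Schauder estimate by decomposing the solution into a Newtonian-potential part and a harmonic part. Let $\Phi(x)=c_N|x|^{2-N}$ be the fundamental solution of $-\Delta$ on $\R^N$ and set
$$
w(x):= \bigl(\Phi \ast (f\, \chi_{B_1})\bigr)(x), \qquad v:= u - w.
$$
Then $-\Delta w = f$ in $B_1$, so $v$ is harmonic in $B_1$, and the estimate will follow by controlling $w$ and $v$ separately.

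First I would establish the Kellogg-type Hölder estimate for the Newtonian potential, namely
$$
\|w\|_{C^{2,\alpha}(B_1)} \le C\bigl(\|f\|_{L^\infty(B_1)} + [f]_{C^{0,\alpha}(B_1)}\bigr).
$$
This is the genuine analytic content of the lemma. The $L^\infty$ and $C^1$ bounds for $w$ follow from the integrability of $\Phi$ and $\nabla \Phi$ on bounded sets. For the second derivatives one uses the standard representation
$$
\partial_{ij} w(x) = \int_{B_1} \partial_{ij}\Phi(x-y)\bigl(f(y)-f(x)\bigr)\,dy + f(x)\int_{\partial B_1}\partial_i\Phi(x-y)\,\nu_j(y)\,d\sigma(y),
$$
valid since $\partial_{ij}\Phi$ is only conditionally integrable; the Hölder modulus of the first term on the right is then controlled by $[f]_{C^{0,\alpha}(B_1)}$ by splitting the integration domain into a small ball around $x$ (where the difference $f(y)-f(x)$ absorbs the singularity) and its complement, and by exploiting the mean-zero property of $\partial_{ij}\Phi$ on spheres.

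Next I would handle the harmonic remainder $v$. Since $\Delta v = 0$ in $B_1$, classical interior derivative estimates for harmonic functions (a direct consequence of the mean value property applied to each partial derivative) yield
$$
\|\nabla^k v\|_{L^\infty(B_r)} \le C(N,k,r)\,\|v\|_{L^\infty(B_1)} \qquad \text{for every } r<1,\ k\in \N.
$$
Applying this with $k=3$ on $B_{1/2}$ (so the mean-value balls fit inside, say, $B_{3/4}$) and bounding $[\nabla^2 v]_{C^{0,\alpha}(B_{1/2})}$ by $\|\nabla^3 v\|_{L^\infty(B_{1/2})}$ via the mean value theorem yields
$$
\|v\|_{C^{2,\alpha}(B_{1/2})} \le C\,\|v\|_{L^\infty(B_1)}.
$$

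Finally I would combine the estimates via
$$
\|v\|_{L^\infty(B_1)} \le \|u\|_{L^\infty(B_1)} + \|w\|_{L^\infty(B_1)} \le \|u\|_{L^\infty(B_1)} + C\bigl(\|f\|_{L^\infty(B_1)} + [f]_{C^{0,\alpha}(B_1)}\bigr)
$$
and $\|u\|_{C^{2,\alpha}(B_{1/2})} \le \|v\|_{C^{2,\alpha}(B_{1/2})} + \|w\|_{C^{2,\alpha}(B_{1/2})}$, which produces the claimed bound. The main obstacle is the sharp Hölder estimate for the second derivatives of the Newton potential in the first step; all remaining steps are routine consequences of the maximum principle and the mean value property for harmonic functions.
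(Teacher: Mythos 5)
The paper itself does not prove this lemma — it records it as a classical result and the ``proof'' is simply a citation of Gilbarg--Trudinger, Theorems 4.6 and 6.6. Your proposal reconstructs the standard proof of that interior Schauder estimate: decompose $u = w + v$ with $w$ the Newton potential of $f\chi_{B_1}$ and $v$ harmonic in $B_1$, establish a Kellogg-type $C^{2,\alpha}$ bound for $w$ via the representation formula for $\partial_{ij}w$ (splitting the volume integral into a small ball around $x$, where $f(y)-f(x)$ absorbs the singularity, and its complement, and using the mean-zero property of $\partial_{ij}\Phi$ over spheres), and control $v$ by interior derivative estimates from the mean value property. This is precisely the argument underlying the Gilbarg--Trudinger theorem the paper cites (their Lemma 4.4 together with Theorem 4.6), so your proof is correct and in effect fills in what the paper takes for granted. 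The one place to be careful is the intermediate claim that $\|w\|_{C^{2,\alpha}(B_1)}$ is controlled on all of $B_1$: the boundary-integral term $f(x)\int_{\partial B_1}\partial_i\Phi(x-y)\nu_j(y)\,d\sigma(y)$ requires a genuine Kellogg-type argument to be Hölder up to $\partial B_1$, whereas on the compactly contained ball $B_{1/2}$ this kernel is smooth and no such work is needed. Since your final combination step only uses the bound on $B_{1/2}$, it is cleanest to state the Newton-potential estimate there; with that adjustment the argument is airtight.
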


\begin{Lemma} \label{Boundaryest}
Let  $f \in C^{0,\alpha}(\ov B_{1})$ and   $u \in C^{2,\alpha}(\ov B_1)$ solve  the equation $-\Delta u = f$ in $B_1$.

Assume there exists $\varphi\in  C^{2,\alpha}(\ov B_1)$ such that $u=\varphi$ on $\partial B_1$.
 Then there exists a constant $C=C(N,\alpha)>0$ such that
  \begin{align}
    \label{eqe2}
   \| u\|_{C^{2,\alpha} (B_{1})} \le C (  \| \varphi\|_{C^{2,\alpha} (B_{1})}  +\|f\|_{L^\infty(B_1)} +[f]_{C^{0,\alpha}(B_1)} +  \|u_R\|_{L^\infty(B_1)}).
  \end{align}
\end{Lemma}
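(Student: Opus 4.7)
The plan is to reduce this boundary estimate to the classical zero-Dirichlet-data Schauder estimate, which is readily available in standard references (e.g.\ \cite[Theorem 6.6]{GilbardTru}), and then combine it with a triangle inequality to reintroduce the boundary datum $\varphi$.

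First, I would define $v := u - \varphi \in C^{2,\alpha}(\ov B_1)$. By construction $v$ vanishes on $\partial B_1$, and it satisfies
$$
-\Delta v \;=\; -\Delta u + \Delta \varphi \;=\; f + \Delta \varphi \;=:\; g \quad\text{in } B_1.
$$
Since $\varphi \in C^{2,\alpha}(\ov B_1)$ we have $\Delta \varphi \in C^{0,\alpha}(\ov B_1)$ with
$$
\|g\|_{L^\infty(B_1)} + [g]_{C^{0,\alpha}(B_1)} \;\le\; \|f\|_{L^\infty(B_1)} + [f]_{C^{0,\alpha}(B_1)} + C\,\|\varphi\|_{C^{2,\alpha}(B_1)}
$$
for a constant $C = C(N)$.

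Second, I would invoke the classical global Schauder estimate for the Poisson equation on the unit ball with zero Dirichlet boundary data, applied to $v$. This gives
$$
\|v\|_{C^{2,\alpha}(B_1)} \;\le\; C \bigl( \|g\|_{L^\infty(B_1)} + [g]_{C^{0,\alpha}(B_1)} + \|v\|_{L^\infty(B_1)} \bigr),
$$
where $C = C(N,\alpha)$. The zero-boundary-data version is itself obtained in the standard way by combining the interior estimate in Lemma~\ref{Interestit} with a boundary flattening/half-ball estimate, covering $\overline{B_1}$ by finitely many interior balls and boundary charts; in the specific case of $\partial B_1$, the boundary is already smooth, so only a fixed finite number of charts is needed and the constant depends only on $N$ and $\alpha$.

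Third, I would undo the substitution. From $u = v + \varphi$ and the triangle inequality,
$$
\|u\|_{C^{2,\alpha}(B_1)} \;\le\; \|v\|_{C^{2,\alpha}(B_1)} + \|\varphi\|_{C^{2,\alpha}(B_1)},
\qquad \|v\|_{L^\infty(B_1)} \;\le\; \|u\|_{L^\infty(B_1)} + \|\varphi\|_{L^\infty(B_1)},
$$
and combining these with the previous step yields \eqref{eqe2}. There is no genuine obstacle here, since the hardest ingredient -- the global Schauder estimate with zero Dirichlet data -- is a standard textbook result; the only care required is to keep track of how $\|\varphi\|_{C^{2,\alpha}}$ enters after substitution, and to ensure the constant depends only on $N$ and $\alpha$, which is clear from the smoothness of the fixed domain $B_1$.
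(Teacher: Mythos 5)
Your proposal is correct, and it is essentially the same approach as the paper, which simply cites this lemma as the classical global Schauder estimate (Gilbarg--Trudinger Theorem 6.6, which already covers non-homogeneous $C^{2,\alpha}$ boundary data). Your extra step of subtracting $\varphi$ to reduce to zero Dirichlet data and then reinserting it by the triangle inequality is a standard equivalent derivation of that same estimate; the only cosmetic point is that the $\|u_R\|_{L^\infty(B_1)}$ appearing in \eqref{eqe2} is a typo in the paper for $\|u\|_{L^\infty(B_1)}$, which is what your argument (correctly) produces.
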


\begin{Lemma}
  \label{basic-poisson}
  Let $z \in \R^N$, $R>0$ and $f \in C^{0,\alpha}(B_{R}(z))$.\\
  Moreover, let $u \in C^{2,\alpha}_{loc}(B_R(z)) \cap L^\infty(B_R(z))$ solve $-\Delta u = f$ in $B_R(z)$. Then there exists a constant $C>0$, independent of $R$, with the property that
  \begin{align}
    \label{eq:R-dependent}
  &\sum_{i = 0}^2 R^i \|\nabla^i u\|_{L^\infty(B_{\frac{R}{2}}(0))} + R^{2+\alpha}[\nabla^2 u]_{C^{0,\alpha} (B_{\frac{R}{2}}(0))}\nonumber\\
 &\le C( R^2 \|f\|_{L^\infty(B_{R}(0))} + R^{2+\alpha} [f]_{C^{0,\alpha}(B_{R}(0))} +  \|u\|_{L^\infty(B_{R}(0))}).
  \end{align}
Furthermore if

$f \in C^{0,\alpha}(\ov B_{R}(z))$  and  $u \in C^{2,\alpha}_{loc}(\ov B_R(z)) \cap L^\infty(B_R(z))$ solve $-\Delta u = f$ in $B_R(z)$, with $u=\varphi$ on $\partial B_R(z)$,  then
  \begin{align}
    \label{eq:R-dependen2}
&\sum_{i = 0}^2 R^i \|\nabla^i u\|_{L^\infty(B_{R}(0))} +   R^{2+\alpha}[\nabla^2 u]_{C^{0,\alpha} (B_{R}(0))}\nonumber\\
&\le C( \sum_{i = 0}^2 R^i \|\nabla^i \varphi \|_{L^\infty(B_{R}(0))} +   R^{2+\alpha}[\nabla^2 \varphi]_{C^{0,\alpha} (B_{R}(0))} +  \|u\|_{L^\infty(B_{R}(0))} )\nonumber\\
 &+  C(R^2 \|f\|_{L^\infty(B_{R}(0))} + R^{2+\alpha} [f]_{C^{0,\alpha}(B_{R}(0))} ).
  \end{align}
  Here, $[\cdot]$ is the Hölder semi-norm defined in Definition~\ref{main-definition-hoelder-etc}.
\end{Lemma}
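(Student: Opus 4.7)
The plan is to deduce both estimates from their unit-ball counterparts, namely Lemma~\ref{Interestit} for (\ref{eq:R-dependent}) and Lemma~\ref{Boundaryest} for (\ref{eq:R-dependen2}), by a translation and rescaling argument. The point is that the classical estimates are stated on the unit ball, and the constants appearing there are universal; the scale-invariance of the desired inequalities is precisely what one obtains after tracking how each norm transforms under the dilation $x \mapsto z + Rx$.

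Concretely, for the interior estimate I would set $\tilde u(x) := u(z + Rx)$ and $\tilde f(x) := R^{2} f(z + Rx)$ for $x \in B_{1}$, so that by the chain rule $-\Delta \tilde u = \tilde f$ on $B_{1}$, with $\tilde u \in C^{2,\alpha}_{loc}(B_{1}) \cap L^{\infty}(B_{1})$. Lemma~\ref{Interestit} then gives
\begin{equation*}
\|\tilde u\|_{C^{2,\alpha}(B_{1/2})} \le C\bigl(\|\tilde f\|_{L^{\infty}(B_{1})} + [\tilde f]_{C^{0,\alpha}(B_{1})} + \|\tilde u\|_{L^{\infty}(B_{1})}\bigr)
\end{equation*}
with $C = C(N,\alpha)$. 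The bookkeeping step is to translate each term back: from $\nabla^{i}\tilde u(x) = R^{i}(\nabla^{i} u)(z + Rx)$ one has $\|\nabla^{i}\tilde u\|_{L^{\infty}(B_{1/2})} = R^{i}\|\nabla^{i} u\|_{L^{\infty}(B_{R/2}(z))}$ and $[\nabla^{2}\tilde u]_{C^{0,\alpha}(B_{1/2})} = R^{2+\alpha}[\nabla^{2} u]_{C^{0,\alpha}(B_{R/2}(z))}$, while $\|\tilde f\|_{L^{\infty}(B_{1})} = R^{2}\|f\|_{L^{\infty}(B_{R}(z))}$, $[\tilde f]_{C^{0,\alpha}(B_{1})} = R^{2+\alpha}[f]_{C^{0,\alpha}(B_{R}(z))}$, and $\|\tilde u\|_{L^{\infty}(B_{1})} = \|u\|_{L^{\infty}(B_{R}(z))}$. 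Substituting these identities into the unit-ball estimate reproduces (\ref{eq:R-dependent}) with a constant $C$ depending only on $N$ and $\alpha$, and in particular independent of $R$.

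For the boundary estimate the construction is identical, with the additional change of variable $\tilde \varphi(x) := \varphi(z + Rx)$ on $\overline{B_{1}}$, so that $\tilde u = \tilde \varphi$ on $\partial B_{1}$. Lemma~\ref{Boundaryest} applied to $\tilde u$ gives the same right-hand side as before plus the extra term $C\|\tilde \varphi\|_{C^{2,\alpha}(B_{1})}$. Using the analogous scaling identities $\|\nabla^{i}\tilde \varphi\|_{L^{\infty}(B_{1})} = R^{i}\|\nabla^{i}\varphi\|_{L^{\infty}(B_{R}(z))}$ and $[\nabla^{2}\tilde \varphi]_{C^{0,\alpha}(B_{1})} = R^{2+\alpha}[\nabla^{2}\varphi]_{C^{0,\alpha}(B_{R}(z))}$, together with the fact that $\tilde u$ is now controlled on the entire $B_{1}$ (not only $B_{1/2}$), one obtains (\ref{eq:R-dependen2}).

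There is no genuine obstacle in the argument: the entire proof is a scaling computation combined with the standard Schauder estimates on $B_{1}$. The only point requiring a little care is to verify that the powers of $R$ inserted into the left-hand sides of (\ref{eq:R-dependent}) and (\ref{eq:R-dependen2}) match exactly those generated by the dilation, which is automatic once one has written down $\nabla^{i}\tilde u(x) = R^{i}(\nabla^{i} u)(z + Rx)$ and its Hölder analogue. This is precisely the scaling behaviour dictated by the fact that $-\Delta$ has homogeneity $-2$ and is what makes the final constant $R$-independent.
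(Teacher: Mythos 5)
Your proof is correct and follows essentially the same route as the paper: translate and rescale to the unit ball, apply Lemmas~\ref{Interestit} and \ref{Boundaryest} respectively, and convert back via the scaling identities $\nabla^i \tilde u = R^i (\nabla^i u)(z + R\,\cdot)$ and their H\"older analogues. The only cosmetic difference is that the paper first reduces to $z=0$ and keeps $f_R(x)=f(Rx)$ writing $-\Delta u_R = R^2 f_R$, whereas you absorb the $R^2$ into $\tilde f$ and carry the translation along; the bookkeeping is identical.
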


\begin{proof}
  Without loss of generality, we may take $z=0$.  Hence we assume that
  \begin{equation}
    \label{eq:poisson-assumption}
  -\Delta u = f\qquad \text{in $B_R(0)$,}
  \end{equation}
and we let $u_R, f_R : B_1(0) \to \R$ be defined by $u_R(x)=  u(Rx)$, $f_R(x)=  f(Rx)$. Then we have
  \begin{align*}
    &\nabla u_R = R (\nabla u)(R \,\cdot\,)\qquad \text{in $B_1(0)$,}\\
    &\nabla^2 u_R = R^2 (\nabla^2 u)(R \,\cdot\,)\qquad \text{in $B_1(0)$.}
  \end{align*}
  Since by (\ref{eq:poisson-assumption}) we have
  \begin{equation}
    \label{eq:poisson-assumption1}
  -\Delta u_R = R^2 f_R\qquad \text{in $B_1(0)$,}
  \end{equation}
we can apply   \eqref{eqe} to get
\begin{align*}
   \| u_R\|_{C^{2,\alpha} (B_{1/2})} \le C ( \|R^2 f_R\|_{L^\infty(B_1)} +[R^2 f_R]_{C^{0,\alpha}(B_1)} +  \|u_R\|_{L^\infty(B_1)}),
  \end{align*}
where $C$ is a constant independent of $R$ and $f$.
Combining this estimate with the scaling identities listed above, we obtain
  \begin{align*}
  &\sum_{i = 0}^2 R^i \|\nabla^i u\|_{L^\infty(B_{\frac{R}{2}}(0))} + R^{2+\alpha}[\nabla^2 u]_{C^{0,\alpha} (B_{\frac{R}{2}}(0))}\nonumber\\
 &\le C( R^2 \|f\|_{L^\infty(B_{R}(0))} + R^{2+\alpha} [f]_{C^{0,\alpha}(B_{R}(0))} +  \|u\|_{L^\infty(B_{R}(0))}),
  \end{align*}
which gives \eqref{eq:R-dependent}  in the case $z=0$. Similarly, we obtain  \eqref{eq:R-dependen2} using Lemma \ref{Boundaryest}.
\end{proof}\QED

\section{Identities and inequalities  involving modified Bessel functions}\label{appendix}
Here  we collect  some  properties on the  modified  Bessel functions $K_{\nu}$.
\subsection{General properties}\label{appendix1}
For $\nu \ge 0$, the  modified Bessel function $K_\nu$ is defined on $(0,\infty)$ by  the integral representation
\begin{equation}\label{Bes2}
 K_{\nu}(x)= \int_{0}^{\infty} e^{-x \cosh (t)} \cosh (\nu t)  dt \qquad \textrm{ for  $x > 0$}.
\end{equation}

\subsection{Derivatives}\label{appendix2}
For all  $x \in(0, + \infty)$, we have
\begin{align}
x \frac{K'_{\nu+1}(x)}{K_{\nu}(x)}  &=-x -(\nu+1) \frac{K_{\nu+1}(x)}{K_{\nu}(x)} , \label{relateK1}\\
x \frac{K'_{\nu}(x)}{K_{\nu}(x)}  &=\nu  -x\frac{K_{\nu+1}(x)}{K_{\nu}(x)} \label{relateK2},
\end{align}
see e.g.   \cite[Page 6]{HangYu}) or  and \cite{BariczPonnusamy} and \cite{El}.
\subsection{Asymptotic behaviour}\label{appendix2-asym}

Asymptotics of   $K_{\nu}$ are given e.g. in \cite[Page 4]{HangYu}). In particular, we have
for all $\nu > 0,$
 \begin{align}
K_\nu(x)\sim \frac{\sqrt{\pi}}{\sqrt{2}}x^{-\frac{1}{2}} e^{-x} \quad \textrm{as} \quad  x\longrightarrow +\infty  \label{eqasymp1} \\
K_\nu(x)\sim \frac{1}{2}\Gamma(\nu)\Bigl(\frac{x}{2}\Bigl)^{-\nu}  \quad \textrm{as} \quad  x\longrightarrow 0.  \label{eqasymp2}
\end{align}

\subsection{Inequalities}\label{appendix3}
The following inequality identity can be found in  \cite{HangYu}: For every $\rho>0$  and $\nu\geq 0$,

\begin{align} \label{eq:compI1I00}
& \frac{K_{\nu+1}(\rho)}{ K_{\nu}(\rho)}> \frac{\nu+\sqrt{\rho^2+\nu^2}}{\rho }.
\end{align}

\end{document}